\newtheorem{theorem}{Theorem}[section]
\newtheorem{lemma}[theorem]{Lemma}
\newtheorem{corollary}[theorem]{Corollary}
\newtheorem{remark}[theorem]{Remark}
\newcommand{\ind}[1]{\mathbf{1}\left(#1\right)}
\newcommand{\pr}[1]{\operatorname{\mathbf{P}}\left(#1\right)}
\newcommand{\prcond}[2]{\operatorname{\mathbf{P}}\left(#1\;\middle\vert\;#2\right)}
\newcommand{\E}[1]{\operatorname{\mathbf{E}}\left[#1\right]}
\newcommand{\vol}[1]{\operatorname{vol}\left(#1\right)}
\newcommand{\origin}{0}
\newcommand{\critical}{\mathrm{c}}
\newcommand{\tperc}{T_\mathrm{perc}}
\newcommand{\tnonperc}{T_\mathrm{nonperc}}
\newcommand{\tdet}{T_\mathrm{det}}
\newcommand{\tisol}{T_\mathrm{isol}}
\newcommand{\1}{{\text{\Large $\mathfrak 1$}}}
\newcommand{\IGNORE}[1]{}
\def\rset{{\mathbb{R}}}
\def\R{{\rset}}
\def\zset{{\mathbb{Z}}}
\def\N{\mathbb{N}}
\def\D{\mathcal{D}}
\def\til{\widetilde}
\begin{document}

\title{The Isolation Time of Poisson Brownian Motions}
\author{Yuval~Peres\thanks{Microsoft Research, Redmond WA, U.S.A.\ \ Email:
       \hbox{peres@microsoft.com}.}
\and
       Perla~Sousi\thanks{University of Cambridge, Cambridge, U.K.\ \ Email:
       \hbox{p.sousi@statslab.cam.ac.uk}.}
\and
       Alexandre~Stauffer\thanks{Microsoft Research, Redmond WA, U.S.A.\ \ Email:
       \hbox{alstauff@microsoft.com}.}
}

\maketitle
\thispagestyle{empty}

\begin{abstract}
Let the nodes of a Poisson point process move independently in $\R^d$ according to Brownian motions.
We study the isolation time for a target particle that is placed at the origin,
namely how long it takes until there is no node of the Poisson point process within distance $r$ of it.
In the case when the target particle does not move, 
we obtain asymptotics for the tail \mbox{probability} which are tight up to constants in the exponent in dimension $d\geq 3$
and tight up to logarithmic factors in the exponent for dimensions $d=1,2$. In the case when the target particle 
is allowed to move independently of the Poisson point process, we show that the best strategy for the target to avoid isolation is to stay put.
\newline
\newline
\emph{Keywords and phrases.} Poisson point process, Brownian motion.
\newline
MSC 2010 \emph{subject classifications.}
Primary 60G55; 
Secondary 60J65. 
\end{abstract}

\setcounter{footnote}{0}
\setcounter{page}{1}
\section{Introduction}
Let $\Pi_0 = \{ X_i\}$ be a Poisson point process over $\mathbb{R}^d$ with intensity $\lambda>0$. To avoid ambiguity, we refer to the points of
$\Pi_0$ as nodes.
For each $s> 0$, let $\Pi_s$ be obtained by letting the nodes of $\Pi_0$ move according to  independent Brownian motions.
More formally, for each $X_i \in \Pi_0$, let $(\xi_i(t))_t$ be a Brownian motion starting at the origin of $\mathbb{R}^d$, independent over different $i$. We define
$$
   \Pi_s = \bigcup_{i} \{X_i+\xi_i(s)\}.
$$
It follows by standard arguments (see e.g.~\cite{vandenberg}) that, for any fixed $s\geq 0$, the process $\Pi_s$ is a Poisson point process of intensity $\lambda$.
Henceforth we consider $\lambda$ and $r$ to be fixed constants and omit dependencies on these quantities from the notation.


We add a target particle at the origin of $\mathbb{R}^d$ at time $0$ and consider the case where this particle does not move.
We define the \emph{isolation time} $\tisol$ as the first time $t$ at which all nodes of $\Pi_t$ have distance at least $r$ from the target particle.
More formally,
$$
   \tisol = \inf\Big\{t\geq 0 \colon 0 \not \in \bigcup_{i}B(X_i+\xi_i(t),r)\Big\},
$$
where $B(x,r)$ denotes the ball of radius $r$ centered at $x$.

In this paper we derive bounds for $\pr{\tisol >t}$ that are tight up to logarithmic factors in the exponent.
In order to simplify the statement of our theorems, we define the function $\Psi_d(t)$ as
\begin{equation}
   \Psi_d(t) = \left\{\begin{array}{rl}
      \sqrt{t}, &\text{ for $d=1$}\\
      \log t, &\text{ for $d=2$}\\
      1, &\text{ for $d\geq 3$}.
   \end{array}\right.
   \label{eq:psi}
\end{equation}
Then, Theorem~\ref{thm:upper},
whose proof is given in Section~\ref{sec:upper},
establishes an upper bound for the tail of $\tisol$.
\begin{theorem}\label{thm:upper}
   For all $d\geq 1$ and any $\lambda, r>0$, there exist $t_0>0$ and a positive constant $c$ such that
   $$
      \pr{\tisol > t} \leq \exp\left(-c \frac{t}{\Psi_d(t)}\right),
   $$
   for all $t\geq t_0$.
\end{theorem}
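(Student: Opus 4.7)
The plan is a time-discretization combined with a decoupling argument. Let $\Delta = C\,\Psi_d(t)$ for a sufficiently large constant $C = C(\lambda, r, d)$, set $T = \lfloor t/\Delta \rfloor$, and $s_k = k\Delta$ for $k = 0, 1, \ldots, T$. Since the event $\{\tisol > t\}$ forces at least one node of $\Pi_{s_k}$ to lie in $B(0, r)$ at every $s_k$, it suffices to prove
\[
\pr{\bigcap_{k=0}^{T} A_k} \leq e^{-cT}, \qquad A_k := \bigl\{|\Pi_{s_k} \cap B(0, r)| \geq 1\bigr\}.
\]

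By the Markov property of $(\Pi_s)_{s\geq 0}$ and the independence of the Brownian motions of distinct nodes, conditional on $\Pi_{s_{k-1}} = \eta$,
\[
\pr{A_k \mid \Pi_{s_{k-1}} = \eta} = 1 - \prod_{y \in \eta}\bigl(1 - p_y(\Delta)\bigr), \qquad p_y(\Delta) := \pr{|y + W_\Delta| \leq r},
\]
with $W$ a standard $d$-dimensional Brownian motion. Call $\eta$ \emph{good} if $\sum_{y \in \eta} p_y(\Delta) \leq K$ for a constant $K$ to be chosen, and set $G_k := \{\Pi_{s_{k-1}} \text{ is good}\}$. For $\Delta$ large, every $p_y(\Delta) \leq 1/2$ (the worst case is $y = 0$, giving $\pr{|W_\Delta|\leq r}\to 0$ at a dimension-dependent rate as $\Delta\to\infty$), so $1-x\geq e^{-2x}$ applies term by term and yields $\pr{A_k \mid \Pi_{s_{k-1}}=\eta} \leq 1 - e^{-2K} =: 1 - c_0$ on $G_k$.

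Writing $g_k := \mathbf{1}_{G_k}$, the process
\[
N_k := \mathbf{1}_{A_1 \cap \cdots \cap A_k} \prod_{j=1}^{k} (1 - c_0 g_j)^{-1}
\]
is a nonnegative supermartingale with respect to $(\mathcal{F}_{s_k})$ with $N_0 = 1$, because each $g_{k+1}$ is $\mathcal{F}_{s_k}$-measurable and $\pr{A_{k+1}\mid\mathcal{F}_{s_k}}\leq 1 - c_0 g_{k+1}$. Hence $\E{N_T}\leq 1$, which rearranges to
\[
\pr{\bigcap_{k \leq T} A_k,\ \#\{k \leq T : g_k = 1\} \geq m} \leq (1 - c_0)^m \qquad \text{for every } m \geq 0.
\]

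It remains to show that $\#\{k\leq T : g_k=1\} \geq (1 - 2q_0)T$ with probability at least $1 - e^{-c_1 T}$, for some $q_0 < 1/2$. The marginal bound $\pr{G_k^c}\leq q_0$ follows from Campbell's formula $\E{\sum_{y\in\Pi_0} p_y(\Delta)} = \lambda\,\vol{B(0,r)}$ together with a sub-Poisson tail for $\sum_y p_y(\Delta)$ obtained from the Laplace functional of the Poisson process, by taking $K$ large enough. The main obstacle is the quantitative mixing estimate needed next: the events $(G_k)$ must decorrelate across $k$ fast enough for a Bernstein-type inequality to yield the claimed concentration of $\sum_k g_k$. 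This is precisely where the dimension-dependent scale $\Delta = \Psi_d(t)$ enters: the rate at which a Brownian motion disperses from $B(0,r)$ is dimension-sensitive, and $\Psi_d(t)$ is calibrated so that the good events have both small marginal failure probability and weak correlations, which is exactly what is needed to survive the union bound over the $T = t/\Psi_d(t)$ time steps.
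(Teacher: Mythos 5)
Your discretization and the supermartingale decoupling are fine as far as they go: conditioning on the configuration at time $s_{k-1}$ being good does give a detection probability at time $s_k$ bounded away from $1$, and the resulting bound $\pr{\cap_{k\leq T} A_k,\ \sum_k g_k\geq m}\leq (1-c_0)^m$ is correct. The problem is that the one estimate you defer --- exponential concentration of the number of good steps --- is the actual heart of the theorem, and nothing in your sketch makes it accessible. The events $G_k$ are all functions of the same evolving Poisson configuration and are strongly positively correlated: a single atypical accumulation of nodes near the origin (in $d=1$, of order $K\sqrt{\Delta}\sim t^{1/4}$ nodes within distance $\sqrt{\Delta}$ of the origin, an event of probability roughly $e^{-ct^{1/4}}$, vastly larger than the target $e^{-c\sqrt t}$) makes several consecutive $G_k$ fail at once, and such accumulations can recur throughout $[0,t]$, fed by nodes arriving from far away. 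So a union bound over bad steps is hopeless, and there is no off-the-shelf mixing or Bernstein inequality that applies; the claim that $\Psi_d(t)$ is ``calibrated'' so that the $G_k$ decorrelate is an assertion, not an argument.

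To actually prove $\pr{\sum_k(1-g_k)>2q_0 T}\leq e^{-c_1T}$ one is essentially forced to control the total weight $\sum_{k\leq T}\sum_{y\in\Pi_{s_{k-1}}}p_y(\Delta)$, a compound-Poisson-type functional whose per-node contribution is a discretized occupation time of a neighbourhood of the origin. Bounding its upper tail requires exactly the two inputs your proposal never supplies and which drive the paper's proof: the Wiener sausage asymptotics $\E{\vol{W_0(t)}}=\Theta(t/\Psi_d(t))$, which control how many nodes ever come near the origin during $[0,t]$, and the exponential tail, at scale $\Psi_d(t)$, of a single Brownian motion's occupation time of $B(0,r)$. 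Once those are in hand, one may as well run the paper's argument directly (split $\Pi_0$ into $K$ thin independent Poisson processes and bound the Lebesgue measure of the set of detected times), which needs no discretization and no mixing claim. As it stands, the gap in your proposal is not a routine technicality but the main analytic content of the theorem.
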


It is easy to see that there is a positive constant $c$ so that the following lower bound holds in all dimensions~$d\geq 1$:
\begin{equation}
   \pr{\tisol >t} \geq \exp\left(- c t\right).
   \label{eq:lowerbound}
\end{equation}
This is true since, with constant probability, there is a node within distance $r/2$ of the origin at time $0$ and the probability that, from time $0$ to $t$,
this node never leaves a ball of radius
$r/2$ centered at its initial position is $e^{-\Theta(t)}$ (see for instance \cite{CTaylor});
this implies that this node is within distance $r$ of the origin throughout the time interval $[0,t]$ with probability $e^{-\Theta(t)}$.
Here and in the rest of the paper, $\Theta(t)$ denotes any function which is bounded above and below by constant multiples of $t$.
Comparing Theorem~\ref{thm:upper} with~\eqref{eq:lowerbound},
we see that the exponent in Theorem~\ref{thm:upper} is tight up to constants for $d\geq 3$ and tight up to logarithmic factors for $d=2$.
For $d=1$, the lower bound in~\eqref{eq:lowerbound} is far from the upper bound in Theorem~\ref{thm:upper}. We
obtain a better lower bound in the next theorem, which we prove in Section~\ref{sec:lower}. This lower bound matches the upper bound up to logarithmic factors in the exponent.
\begin{theorem}\label{thm:lower}
   For $d=1$ and any $\lambda, r>0$, there exist $t_0>0$ and a positive constant $c$ such that
   $$
      \pr{\tisol > t} \geq \exp\left(-c \sqrt{t}\log t \log\log t\right),
   $$
   for all $t\geq t_0$.
\end{theorem}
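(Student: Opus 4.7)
The plan is to construct an event $E \subseteq \{\tisol > t\}$ whose probability matches the target exponent. The idea is two-layered: (i) condition on a rare initial Poisson configuration having an anomalously dense cloud of nodes near the origin, paying exactly a cost of $\exp(-c\sqrt{t}\log t \log\log t)$, and (ii) show that conditionally on this rich initial configuration, the natural Brownian dynamics keep some node within $r$ of the origin at all times in $[0,t]$ with probability bounded below by a positive constant.

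Set $L = \sqrt{t}$ and $N = C\sqrt{t}\log t$ for a large constant $C = C(\lambda,r)$, and let $F = \{|\Pi_0 \cap [-L,L]| \geq N\}$. Since $|\Pi_0 \cap [-L,L]|$ is Poisson with mean $2\lambda L$, Stirling's formula gives
\[
\pr{F} \;\geq\; \exp\!\bigl(-(1+o(1))\,N\log(N/(2\lambda L))\bigr) \;=\; \exp\!\bigl(-(1+o(1))\,C\sqrt{t}\log t \log\log t\bigr),
\]
which matches the desired exponent. Moreover, conditionally on $|\Pi_0 \cap [-L,L]| = N$ (an event whose probability is comparable to $\pr{F}$), the $N$ initial positions are i.i.d.\ uniform in $[-L,L]$ and their Brownian motions remain independent.

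A direct Gaussian integral shows that for every $s \in [0, t]$ and each such node, $\pr{X_i + \xi_i(s) \in [-r,r]} \geq c_1 r/L$. Hence $V(s) := |\Pi_s \cap [-r,r]|$ stochastically dominates a $\mathrm{Bin}(N, c_1 r/L)$ variable with mean at least $c_1 C r \log t$. A Chernoff bound yields $\pr{V(s) \leq K \mid F} \leq t^{-c_2}$, where $K = \Theta(\log t)$ and $c_2$ can be made arbitrarily large by enlarging $C$. Taking a union bound over the integer times $s_j = j$, $j=0,1,\ldots,\lfloor t\rfloor$, we get $V(s_j) \geq K$ for all $j$ (given $F$) with probability at least $3/4$. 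On this event, within each window $[s_j, s_j+1]$ a positive fraction of the $K$ nodes in $[-r,r]$ at time $s_j$ lie in $[-r/2, r/2]$, and each such node stays in $[-r,r]$ throughout the window with a constant probability $c_3 > 0$; thus some node ``bridges'' the window except with probability $(1-c_3)^{\Theta(K)} = t^{-c_4}$. A further union bound over $j$ gives $\pr{\tisol > t \mid F} \geq 1/2$, and combining with the previous step yields $\pr{\tisol > t} \geq \pr{F}/2 \geq e^{-c\sqrt{t}\log t \log\log t}$.

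The main obstacle is verifying that under the conditioning on $F$ the Brownian paths retain enough independence and Gaussian structure to make the Chernoff and window-bridging arguments go through uniformly; this rests on the standard fact that a Poisson process conditioned on its count in a bounded region is i.i.d.\ uniform there, together with the independence of the Brownian motions from the initial configuration. The technical heart is tuning $C$ so that the various union-bound losses (a $\log t$ from the number of sample times, and a further $\log\log t$ in the Poisson tail) are absorbed into a single leading constant, preserving precisely the $\sqrt{t}\log t\log\log t$ factor in the exponent.
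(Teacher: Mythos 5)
Your proposal is essentially the paper's own argument: the same rare event (roughly $C\sqrt{t}\log t$ nodes in a window of width $\sqrt{t}$, costing $\exp(-c\sqrt{t}\log t\log\log t)$ by the Poisson tail), the same division into unit time windows, the same $\Theta(1/\sqrt{t})$ per-node bridging probability, and the same union bound over the $t$ windows; the paper merely works on $[t,2t]$ by stationarity (so the Gaussian spread alone gives the density bound) and skips your intermediate count $V(s_j)$ by bounding directly the probability that a single node detects the origin throughout a given window. The one slip is the unjustified claim that a positive fraction of the nodes in $[-r,r]$ at time $s_j$ lie in $[-r/2,r/2]$ --- simply define $V(s)$ as the number of nodes in $[-r/2,r/2]$ from the outset (the same Gaussian lower bound applies) and the argument closes.
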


The isolation time, as defined above, can be generalized in two distinct ways: by replacing the balls in the definition of $\tisol$ with 
more general sets of the same volume, or by allowing the target particle to move. 
The next theorem, which we prove in Section~\ref{sec:beststrategy}, 
establishes that both these generalization can only decrease the tail of the isolation time. 
In order to state the theorem, let $(D_s)_{s\geq 0}$ be a collection of closed sets in $\R^d$. 
We say that the target is detected at time $t$ if some node of the Poisson point process is in the set $D_t$ at time $t$. 
We define the isolation time in this context as 
\[
\tisol^{D} = \inf\{t\geq 0: \forall i, \ X_i + \xi_i(t) \notin D_t \}.
\]



\begin{theorem}\label{thm:beststrategy}
Let $(D_s)_s$ be a collection of closed sets in $\R^d$ that are uniformly bounded, i.e., there exists $L_t>0$ such that $\cup_{s\leq t}D_s \subseteq B(0,L_t)$. 
Then, for all $t\geq 0$, we have
\[
\pr{\tisol^D >t} \leq \pr{\tisol^B>t},
\]
where $(B_s)_s$ are closed balls in $\R^d$ centered at the origin with
$\vol{B_s} = \vol{D_s}$ for all $s$. 
\end{theorem}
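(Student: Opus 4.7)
The proof has two parts: a probabilistic reduction and a geometric rearrangement inequality. First, I would apply the marking theorem for the Poisson process of pairs $(X_i, \xi_i)$ of initial positions with their attached Brownian trajectories. A pair $(x,\xi)$ is caught by $(D_s)$ before time $t$ iff $x \in A_D(\xi) := \bigcup_{s \leq t}(D_s - \xi(s))$, so the caught pairs form a Poisson process with total intensity $\lambda\,\E{\vol{A_D(\xi)}}$, where $\xi$ denotes a single Brownian motion from the origin. This yields
\[
   \pr{\tisol^D > t} = \exp\bigl(-\lambda\,\E{\vol{A_D(\xi)}}\bigr),
\]
and similarly for $\tisol^B$. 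The theorem is therefore equivalent to the geometric inequality $\E{\vol{A_D(\xi)}} \geq \E{\vol{A_B(\xi)}}$.

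To establish this, I would use Steiner symmetrization. For a hyperplane $H$ through the origin, let $D_s^H$ denote the Steiner symmetrization of $D_s$ in $H$; note $\vol{D_s^H} = \vol{D_s}$. The central lemma is the single-direction inequality
\[
   \E{\vol{\bigcup_{s \leq t}(D_s - \xi(s))}} \geq \E{\vol{\bigcup_{s \leq t}(D_s^H - \xi(s))}}.
\]
After a rotation one may take $H = \{x_d = 0\}$ and write $\xi(s) = (\widetilde{\xi}(s), \xi_d(s))$ with $\widetilde{\xi}$ and $\xi_d$ independent $(d-1)$- and $1$-dimensional Brownian motions. Conditioning on $\widetilde{\xi}$ and decomposing the $d$-dimensional volume into 1D slices perpendicular to $H$ via Fubini reduces matters to a one-dimensional statement: for bounded 1D sets $E_s$ (depending on the conditioning data but not on $\xi_d$),
\[
   \operatorname{\mathbf{E}}\left[\operatorname{vol}_1\left(\bigcup_{s \leq t}(E_s - \xi_d(s))\right)\right] \geq \operatorname{\mathbf{E}}\left[\operatorname{vol}_1\left(\bigcup_{s \leq t}(I_s - \xi_d(s))\right)\right],
\]
where $I_s$ is the symmetric interval of length $\operatorname{vol}_1(E_s)$. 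Approximating by finite unions on a time grid, this 1D inequality follows from the Brascamp--Lieb--Luttinger rearrangement inequality applied to the joint Gaussian density of $(\xi_d(s_k))_k$. Iterating Steiner symmetrizations along a dense sequence of directions, the sets $(D_s)$ converge to their Schwarz symmetric rearrangement, namely the centered balls $(B_s)$, and the expected sausage volume is monotone non-increasing along this sequence, yielding the desired inequality in the limit.

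The main obstacle is the single-direction Steiner symmetrization inequality itself: while at a single time point it is trivial (translation invariance of Lebesgue measure), multiple correlated time points force one to invoke a genuine multiple-integral rearrangement inequality in the Gaussian setting. An additional technicality is the passage from a time discretization to the continuous-time union, which uses the uniform boundedness hypothesis $\cup_{s \leq t} D_s \subseteq B(0, L_t)$ together with continuity of Brownian paths to control truncation errors, and the measurability of the 1D slices arising in the Fubini-type decomposition.
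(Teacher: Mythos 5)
Your first step is where the argument breaks down: the identity $\pr{\tisol^D>t}=\exp\left(-\lambda\E{\vol{A_D(\xi)}}\right)$ is the formula for the \emph{detection} time, not the isolation time. From the definition $\tisol^D=\inf\{s\geq 0:\forall i,\ X_i+\xi_i(s)\notin D_s\}$, the event $\{\tisol^D>t\}$ is the event that \emph{at every} time $s\leq t$ \emph{some} node lies in $D_s$ --- a perpetual-coverage event --- and not the event that no node is ever caught by $(D_s)$ up to time $t$. The marking/thinning computation you perform gives the probability that the set of ``caught'' pairs is empty, i.e.\ the tail of the first detection time as in~\eqref{eq:wiener}; it says nothing about whether the detections of the various nodes overlap so as to cover all of $[0,t]$. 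The paper explicitly notes in Section~\ref{sec:questions} that no such stochastic-geometry reduction is known for $\tisol$ (whether its tail even has the form $\exp(-\tilde c_d\, t/\Psi_d(t)(1+o(1)))$ is posed as an open question). So the theorem is not equivalent to the inequality $\E{\vol{A_D(\xi)}}\geq\E{\vol{A_B(\xi)}}$, and the remainder of your argument, however it were completed, would prove a statement about the wrong event (with, incidentally, the detection analogue already known from the work of Peres et al.\ cited in the paper).

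The rearrangement machinery in your second part is the right tool, but it must be applied to a different functional. The paper's route (Lemma~\ref{lem:finitepoints}) is: discretize time to dyadic levels, reduce to finitely many nodes started uniformly in a large ball, and then treat one node at a time --- for node $1$, condition on the (stopping) times at which nodes $2,\dots,k$ all fail to detect; the probability that node $1$ covers exactly those times is a multiple integral of indicators of the sets against products of heat kernels, to which the Brascamp--Lieb--Luttinger inequality~\cite[Theorem~1.2]{BrascLiebLutt} applies directly (no Steiner iteration is needed, since that inequality already handles full symmetric decreasing rearrangement, and the heat kernel is a symmetric decreasing function of $|x-y|$). Iterating over the nodes and then undoing the discretization (Lemma~\ref{lem:discreteapprox}, via L\'evy's modulus of continuity and the uniform boundedness hypothesis) yields the theorem. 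To salvage your outline you would need to replace the exponential formula by this per-node conditioning step; your Steiner-symmetrization scheme could in principle substitute for the direct use of Brascamp--Lieb--Luttinger, but only after the event has been decomposed correctly.
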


The corollary below handles the case when the target moves independently of the nodes of $\Pi_0$;
it establishes that the best strategy for the target to avoid isolation is to stay put. 
This is obtained by letting $g(s)$ be the location of the target at time $s$, and setting $D_s = B(g(s),r)$ in Theorem~\ref{thm:beststrategy}. 
\begin{corollary}\label{cor:strategy}
  Let $r>0$. Let the location of the target be given by a function $g\colon \mathbb{R}_+ \to \mathbb{R}^d$ that is bounded on compact time intervals and is independent of the nodes of $\Pi_0$.
  If we define
   $$
      \tisol^g = \inf\Big\{t\geq 0 \colon g(t) \not \in \bigcup_{i}B(X_i+\xi_i(t),r)\Big\},
   $$
   then, for any $t\geq 0$, the probability $\pr{\tisol^g > t}$ is maximized when $g\equiv 0$.
\end{corollary}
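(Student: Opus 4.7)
The plan is to discretize time, use the Brascamp--Lieb--Luttinger (BLL) rearrangement inequality to compare joint hit intensities for $D$ versus $B$, and then deduce the probability inequality from a monotonicity property of the coverage probability on the Boolean lattice of trajectory signatures.

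First I would discretize: for a partition $0 < s_1 < \ldots < s_n \leq t$, consider the event $E_n := \bigcap_{j=1}^n \{\Pi_{s_j} \cap D_{s_j} \neq \emptyset\}$. Since trajectories are continuous and each $D_s$ is closed, the events decrease as the partition gets finer and $\{\tisol^D > t\}$ equals $\bigcap_n E_n$; hence $\pr{\tisol^D > t} = \lim_n \pr{E_n}$ and it suffices to prove the inequality for each fixed partition. Next, I would invoke the Poisson marking theorem: each trajectory $(X,\xi)$ carries a signature $T^D(X,\xi) := \{j : X + \xi(s_j) \in D_{s_j}\} \subseteq \{1,\ldots,n\}$, and trajectories of signature $S$ form an independent Poisson process with intensity $\mu^D(S) := \lambda \int \pr{T^D(x,\xi) = S}\, dx$. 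The event $E_n$ is then the event that the signatures of present trajectories cover $\{1,\ldots,n\}$.

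Applying BLL to the joint density of $(x, \xi_{s_1}, \ldots, \xi_{s_n})$ --- with $\mathbf{1}_{D_{s_j}}$ replaced by its symmetric decreasing rearrangement $\mathbf{1}_{B_{s_j}}$, and the Gaussian transition densities being their own rearrangements --- yields, for every $T \subseteq \{1,\ldots,n\}$,
\[
J^D_T := \E{\vol{\bigcap_{j \in T}(D_{s_j} - \xi_{s_j})}} \leq J^B_T,
\]
equivalently $\sum_{S \supseteq T} \mu^D(S) \leq \sum_{S \supseteq T} \mu^B(S)$, with equality when $|T|=1$ (so marginals are preserved). To conclude, I would prove that the coverage probability is monotone in this up-set-sum ordering. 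The key elementary lemma is that shifting an infinitesimal mass $\delta$ from $\mu(S_1), \mu(S_2)$ (for incomparable $S_1, S_2$) to $\mu(S_1 \cap S_2), \mu(S_1 \cup S_2)$ preserves all marginals and, by direct computation, changes $\pr{E_n}$ by $(e^\delta - 1)$ times the probability
\[
\pr{\bigl(\bigcup_{j \in S_1 \setminus S_2} F_j\bigr) \cap \bigl(\bigcup_{j \in S_2 \setminus S_1} F_j\bigr) \cap \bigl(\bigcap_{j \notin S_1 \cup S_2} F_j^c\bigr)},
\]
where $F_j := \{\Pi_{s_j} \cap D_{s_j} = \emptyset\}$, hence non-negative. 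A majorization-type argument on the Boolean lattice then shows $\mu^D$ can be transformed into $\mu^B$ by a sequence of such elementary moves, giving the desired inequality. Corollary~\ref{cor:strategy} follows immediately by taking $D_s = B(g(s), r)$.

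The main obstacle is this last step. BLL by itself produces only the up-set-sum comparison of signature intensities, which is strictly weaker than stochastic dominance on the Boolean lattice; small explicit examples show that stochastic dominance can fail even when BLL holds. The mass-transfer lemma bridges this gap, but the combinatorial argument identifying $\mu^D$ and $\mu^B$ as connected through valid monotone operations (respecting both the marginals and the up-set inequalities) is the delicate technical piece --- effectively a lattice-majorization analog of the Hardy--Littlewood--P\'olya theorem for vectors.
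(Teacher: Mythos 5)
Your reduction to finitely many times, your application of BLL to get the up-set comparisons $J^D_T\le J^B_T$ with equal marginals, and your elementary transfer lemma (moving mass from incomparable $S_1,S_2$ to $S_1\cap S_2,S_1\cup S_2$ increases the coverage probability — the increment you compute is exactly the supermodularity defect of $S\mapsto\pr{[n]\setminus S\text{ is covered}}$) are all sound. The genuine gap is the step you flag yourself: you never show that $\mu^D$ can be carried to $\mu^B$ by a sequence of such transfers, and this is not a technicality that a ``lattice Hardy--Littlewood--P\'olya'' argument will supply. The elementary transfers generate (at most) a supermodular-type order on measures on $2^{\{1,\dots,n\}}$ with fixed marginals, whereas BLL hands you only the upper-orthant order (comparison of up-set sums), which is strictly weaker for $n\ge 3$; moreover, even the supermodular order itself is known not to be generated by elementary lattice transfers in higher dimensions. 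So there exist pairs of intensity measures satisfying every inequality BLL gives you that are \emph{not} connected by any sequence of your moves, and nothing in your argument rules out that $(\mu^D,\mu^B)$ is such a pair. Closing the gap would require either proving directly that the Poisson coverage functional is monotone for the upper-orthant order (unproven here and doubtful for $n\ge 3$), or extracting from BLL joint information about the signature law beyond the up-set sums.

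The paper avoids this combinatorial wall entirely by never symmetrizing all trajectories at once. In Lemma~\ref{lem:finitepoints} it conditions on the paths of nodes $2,\dots,k$; this determines a conditionally deterministic set of times $T_1<\dots<T_\kappa$ at which node $1$ alone must detect, and the conditional probability of that event is a single BLL integral along node $1$'s path, which increases when the sets are replaced by centered balls. Iterating over the nodes gives the full comparison with no lattice-majorization step. Two secondary points: your discretization is stated too casually — $\{\tisol^D>t\}=\bigcap_nE_n$ is false for general time-varying closed sets (only ``$\subseteq$'' is automatic), and the reverse inclusion, which you do need on the ball side to return to continuous time, requires the enlarged sets $\til U_{\ell,n}$, the modulus-of-continuity events $\Omega_{n,i}$, and the reduction to finitely many nodes as in Lemma~\ref{lem:discreteapprox} and \eqref{eq:goalballs2}. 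Also, the statement at hand is the corollary, which in the paper is a one-line consequence of Theorem~\ref{thm:beststrategy} with $D_s=B(g(s),r)$; your proposal is really an (incomplete) alternative proof of that theorem.
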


The isolation time is closely related to other quantities involving Poisson Brownian motions that have been studied in the context of mobile geometric graphs. 
We discuss these connections and give some motivation in Section~\ref{sec:questions}, where we also discuss some open problems.

\section{Proof of the upper bound}\label{sec:upper}

We start with a high-level description of the proof of Theorem~\ref{thm:upper}. We fix $\lambda$ and $r$, and let $K$ be a large positive constant. We take the nodes of $\Pi_0$ and split them into $K$ independent Poisson point processes $\Phi_1,\Phi_2,\ldots,\Phi_K$ of intensity $\displaystyle \tfrac{\lambda}{K}$ each.
We consider the first Poisson point process $\Phi_1$ and look at the amount of time during the time interval $[0,t]$ that the origin has been detected by at least one node of $\Phi_1$. We show that this quantity
is at most $t/2$, with probability at least $1-e^{-c t/\Psi_d(t)}$, for some positive constant $c$. This can be achieved by setting $K$ sufficiently large.
Then, considering only the times at which the origin has not been detected by the nodes of $\Phi_1$, which we denote by $I_2$,
we show that the amount of time within $I_2$ that the origin is detected by a node of $\Phi_2$ is at most $t/4$, with probability at least $1-e^{-ct/\Psi_d(t)}$.
Then, we repeat this procedure for each Poisson point process $\Phi_j$, and,
considering only the times at which the origin has not been detected by the nodes of $\Phi_1,\Phi_2,\ldots,\Phi_{j-1}$, which we denote by $I_j$,
we show that the amount of time within $I_j$ that the nodes of $\Phi_j$ detect the origin is at most $\frac{t}{2^j}$ with probability at least $1-e^{-ct/\Psi_d(t)}$. Then,
taking the union bound over $j=1,2,\ldots,K$,
we have that, with probability at least $1-Ke^{-ct/\Psi_d(t)}$,
the amount of time the origin has been detected by at least one node of $\Pi_0$ during $[0,t]$ is at most $\frac{t}{2}+\frac{t}{4}+\frac{t}{8}+\cdots+\frac{t}{2^K} < t$.
We remark that the sets $I_2,I_3,\ldots$, will be slightly different than the definition above, but we defer the details to the formal proof, which we give below.
\begin{proof}[{\bf Proof of Theorem~\ref{thm:upper}}]
   Let $K$ be a fixed and sufficiently large integer and
   define $\Phi_1,\Phi_2,\ldots,\Phi_K$ to be independent Poisson point processes of intensity $\tfrac{\lambda}{K}$ each.
   Using the superposition property of Poisson processes, we obtain that $\cup_{j=1}^{K}\Phi_j$ is also a Poisson point process in $\R^d$ of intensity $\lambda$. Thus,
   we can couple the nodes of $\Pi_0$ with the nodes of $\Phi_1,\Phi_2,\ldots,\Phi_K$ so that $\Pi_0=\cup_{j=1}^{K}\Phi_j$.

   Denote the points of $\Phi_j$ by $\{X^{(j)}_i\}_{i=1,2,\ldots}$ and let $(\xi^{(j)}_i(s))_{s\geq 0}$ be the Brownian motion that $X_i^{(j)}$ performs, independent over different $i$ and $j$. Thus the position of the node $X_i^{(j)}$ of $\Phi_j$ at time $s$ is
   $X^{(j)}_i+\xi^{(j)}_i(s)$. We say that a node \emph{detects} the origin at time $s$ if the node is inside the ball $B(0,r)$ at time $s$.

   Now, let $I_1=[0,t]$ and
   $$
      Z_1 = \{s \in I_1: \exists X^{(1)}_i \in \Phi_1 \text{ s.t. } X^{(1)}_i+\xi^{(1)}_i(s) \in B(0,r)\}.
   $$
   In words, $Z_1$ is the set of times during the interval $[0,t]$ at which the origin is detected by at least one node of $\Phi_1$.
   Then, for $j\geq 2$, we define inductively $J_j=[0,t]\setminus (\cup_{\ell=1}^{j-1} Z_{\ell})$,
   which is the set of times at which no node of $\Phi_1\cup\Phi_2\cup\cdots\cup \Phi_{j-1}$
   detects the origin.
   Our goal is to analyze the amount of time within $J_j$ that nodes of $\Phi_j$ detect the origin.
   However, when $J_j$ turns out to be large, it will be convenient
   to consider only a subset of $J_j$ of given size. We will denote the set of times we consider by $I_j$ and,
   for any given subset $A\subset \R$,
   we define $|A|$ to be the Lebesgue measure of $A$.
   Then, if $|J_j|\leq \tfrac{t}{2^{j-1}}$, we set $I_j=J_j$;
   otherwise, we let $I_j$ be an arbitrary subset of $J_j$ such that $|I_j|=\tfrac{t}{2^{j-1}}$.
   With this, let $Z_j$ be the set of times within the set $I_j$ at which the origin is detected by at least one node of $\Phi_j$; more formally, we have
   \[
   Z_j = \{ s \in I_j\colon \exists X^{(j)}_i \in \Phi_j \text{ s.t. } X^{(j)}_i+\xi^{(j)}_i(s) \in B(0,r) \}.
   \]
   The lemma below gives a bound for the probability that $|Z_j|$ is large.
   \begin{lemma}\label{lem:iteration}
      For all dimensions $d\geq 1$, there exists a constant $c$ such that, for any $j\in\{1,2,\ldots,K\}$, we have
      $$
         \pr{|Z_j| > \frac{t}{2^j}} \leq \exp{\left(-c\frac{t}{\Psi_d(t)}\right)}.
      $$
   \end{lemma}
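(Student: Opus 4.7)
I would condition on $I_j$, which is $\sigma(\Phi_1,\ldots,\Phi_{j-1})$-measurable and hence independent of $\Phi_j$. Thus $A:=I_j$ may be treated as a deterministic subset of $[0,t]$ with $|A|\le t/2^{j-1}$. Setting $T_i := \int_A \ind{X_i^{(j)}+\xi_i^{(j)}(s)\in B(0,r)}\,ds$, one has $|Z_j|\le \sum_i T_i$, so it suffices to bound $\pr{\sum_i T_i > t/2^j}$.

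My plan is to apply the Chernoff inequality together with the Laplace functional for the marked Poisson process $\Phi_j$ (with the independent Brownian paths $\xi_i^{(j)}$ as marks): for any $\theta>0$,
\[
\pr{\sum_i T_i > t/2^j}
\;\le\; \exp\!\left(-\frac{\theta t}{2^j} + \frac{\lambda}{K}\int_{\R^d}\!\bigl(\E{e^{\theta T(x)}}-1\bigr)\,dx\right),
\]
where $T(x):=\int_A\ind{x+\xi(s)\in B(0,r)}\,ds$ for a standard Brownian motion $\xi$ started at the origin.

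The heart of the argument is a Kac-type moment estimate: for every $k\ge 1$ and every $x\in\R^d$,
\[
\E{T(x)^k}\;\le\; k!\,g_d^{k-1}\,\E{T(x)},\qquad g_d := \sup_{y\in\overline{B(0,r)}}\E{\int_0^t\ind{y+\xi(s)\in B(0,r)}\,ds}.
\]
To establish this I would write $T(x)^k = k!\int_{s_1<\cdots<s_k\in A}\prod_l\ind{x+\xi(s_l)\in B(0,r)}\,ds$, condition on $\F_{s_{k-1}}$, and observe that on the event $\{x+\xi(s_{k-1})\in B(0,r)\}$ the resulting integral over $s_k\in A$ with $s_k>s_{k-1}$ is at most $g_d$ by the Markov property and the definition of $g_d$; iterating down to $k=1$ yields the bound. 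Standard heat-kernel estimates give $g_d\le C\Psi_d(t)$ for some $C=C(d,r)$. Combined with the Fubini identity $\int_{\R^d}\E{T(x)}\,dx=|A|\vol{B(0,r)}$, we obtain, for $\theta g_d<1$,
\[
\int_{\R^d}\bigl(\E{e^{\theta T(x)}}-1\bigr)\,dx \;\le\; \frac{\theta\,|A|\,\vol{B(0,r)}}{1-\theta g_d}.
\]

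Finally, choosing $\theta=1/(2g_d)\asymp 1/\Psi_d(t)$ and substituting $|A|\le t/2^{j-1}$, the exponent in the Chernoff bound becomes
\[
-\frac{t}{2^{j+1}g_d} + \frac{\lambda\vol{B(0,r)}\,t}{2^{j-1}K\,g_d}
\;=\; -\frac{t}{2^{j-1}g_d}\!\left(\tfrac{1}{4}-\tfrac{\lambda\vol{B(0,r)}}{K}\right),
\]
which is at most $-ct/\Psi_d(t)$ uniformly in $j\in\{1,\ldots,K\}$ once $K$ is chosen large enough (depending only on $\lambda,r,d$) that the parenthesised term is bounded below by a positive constant. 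The main obstacle is the Kac-type moment bound together with the heat-kernel estimate $g_d\le C\Psi_d(t)$: this is precisely where the dimension-dependent recurrence/transience behaviour of Brownian motion produces the $\Psi_d(t)$ factor in the final exponent. The remaining steps (Laplace functional, Chernoff optimisation, and choice of $K$) are routine.
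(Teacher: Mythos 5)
Your proof is correct, but it takes a genuinely different route from the paper's. The paper first passes to the thinned process $\Phi_j'$ of nodes that ever detect the origin during $[0,t]$ — a Poisson process whose total mass is $\tfrac{\lambda}{K}\E{\vol{W_0(t)}}\asymp t/\Psi_d(t)$ by the Wiener sausage asymptotics — and dominates $|Z_j|$ by $\sum_{\ell=1}^{N_j}S_\ell$, where $N_j$ is Poisson and the $S_\ell$ are i.i.d.\ occupation times of Brownian motions conditioned to hit $B(0,r)$ before time $t$. It then proves an exponential tail for $S_\ell/\Psi_d(t)$ by iterating Markov's inequality, splits on $\{N_j<4\E{N_j}\}$, and runs a Chernoff bound on the deterministic-length sum, which requires a uniform-in-$t$ bound on the second derivative of the log-moment-generating function. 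You instead condition on $I_j$ (legitimately, since it is $\sigma(\Phi_1,\ldots,\Phi_{j-1})$-measurable and hence independent of $\Phi_j$) and apply Campbell's theorem for the marked Poisson process directly, controlling $\E{e^{\theta T(x)}}$ via the Kac-type moment bound $\E{T(x)^k}\le k!\,g_d^{k-1}\E{T(x)}$; the dimension dependence enters only through $g_d\lesssim\Psi_d(t)$, which is exactly the paper's heat-kernel estimate~\eqref{eq:expect1}, and the total intensity is computed exactly by Fubini as $|A|\vol{B(0,r)}$ rather than through the Wiener sausage. Your route is arguably shorter and cleaner: it avoids the conditioned Brownian motion, the random-index sum, and the $\phi''$ computation, at the cost of invoking the Laplace functional and the (standard but not entirely routine) Kac moment recursion. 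Your requirement that $K>4\lambda\omega_d r^d$ matches the paper's analogous largeness condition on $K$, and the resulting exponent $-ct/\Psi_d(t)$ is uniform over $j\le K$ as needed.
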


   We will give the proof of Lemma~\ref{lem:iteration} in a moment; first, we show how to use Lemma~\ref{lem:iteration} to complete the proof of Theorem~\ref{thm:upper}.
   Clearly, if $|Z_j|\leq \tfrac{t}{2^j}$ for all $j$, then the amount of time at which at least one node of $\Pi_0$ detects the origin is at most $\sum_{j=1}^K \tfrac{t}{2^j}<t$, which yields
   $\tisol \leq t$. Therefore, using this and the union bound, we have
   \begin{align*}
      \pr{\tisol>t}
      \leq  \pr{\bigcup_{j=1}^{K} \left\{|Z_j| > \frac{t}{2^j}\right\}}
      \leq  \sum_{j=1}^{K} \pr{|Z_j|>\frac{t}{2^j}}
      \leq  K \exp\left(-c\frac{t}{\Psi_d(t)}\right),
   \end{align*}
   which completes the proof of Theorem~\ref{thm:upper}.
\end{proof}

Before proving Lemma~\ref{lem:iteration} we introduce some notation and prove a few preliminary results.

   In what follows we fix $j \in\{1,2,\ldots,K\}$. Let
   $$
   \Phi_j' = \{X_i^{(j)} \in \Phi_j: \exists s \in [0,t] \text{ s.t. }  X_i^{(j)}+\xi_i^{(j)}(s) \in B(0,r)\};
   $$
   that is, $\Phi_j'$ is the set of nodes of $\Phi_j$ that detect the origin at some time in $[0,t]$.
   Then $\Phi_j'$ is a thinned Poisson point process with intensity given by $\displaystyle \Lambda(x)=\tfrac{\lambda}{K}\pr{x \in \cup_{s \leq t}B(\xi(s),r)}$,
   where $(\xi(s))_s$ is a standard Brownian motion.

   Let $N_j$ be a Poisson random variable of mean
   \begin{align}\label{eq:lambda}
   \E{N_j} = \Lambda(\R^d) =\frac{\lambda}{K} \E{\vol{W_0(t)}},
   \end{align}
   where
   \begin{align}\label{eq:saucisse}
   W_0(t) = \cup_{s\leq t}B(\xi(s),r)
   \end{align}
   is the Wiener sausage with radius $r$ up to time $t$.
   It is known (see for instance~\cite{BMS, Spitzer}) that, as $t \to \infty$, the expected volume of the Wiener sausage satisfies
   \begin{equation}
   \E{\vol{W_\origin(t)}} = \frac{c(d,r) t}{\Psi_d(t)}(1+o(1)),
   \label{eq:wienervol}
   \end{equation}
   for an explicit positive constant $c(d,r)$.

   For all $\ell=1,2,\ldots$, we let $X_\ell$ be i.i.d.\ random variables in $\R^d$ distributed according to $\tfrac{\Lambda(x)}{\Lambda(\R^d)}$ and $(\xi_\ell(s))_s$ be a Brownian motion
   conditioned on $X_\ell + \xi_\ell$ hitting the ball $B(0,r)$
   before time $t$, independent over different $\ell$. Finally we define
   \[
   S_\ell = \int_{I_j} \ind{X_\ell+\xi_\ell(s) \in B(0,r)} \, ds,
   \]
   i.e.\ the time in $I_j$ that $X_\ell+\xi_\ell$ spends in the ball $B(0,r)$.

   \begin{lemma}\label{lem:domination}
   We have that
   \[
   \pr{|Z_j| > \frac{t}{2^j}} \leq \pr{\sum_{\ell=1}^{N_j} S_\ell > \frac{t}{2^j}}.
   \]
   \end{lemma}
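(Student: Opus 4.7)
The plan is to reduce $|Z_j|$ to a sum over the "detecting" nodes via Lebesgue subadditivity, and then identify the distribution of that sum using the thinning and marking theorems for Poisson point processes, handling $I_j$ via conditioning.

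First, I would start from the observation that $Z_j$ is a union over $i$ of the sets $\{s\in I_j:X^{(j)}_i+\xi^{(j)}_i(s)\in B(0,r)\}$. Since Lebesgue measure is countably subadditive and only nodes in $\Phi_j'$ contribute, this gives the pointwise bound
\[
  |Z_j| \;\leq\; \sum_{X^{(j)}_i\in\Phi_j'}\int_{I_j}\ind{X^{(j)}_i+\xi^{(j)}_i(s)\in B(0,r)}\,ds.
\]
This turns the question into an upper bound for the probability that the right-hand side exceeds $t/2^j$.

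Next, I would identify the law of the right-hand side. By the thinning theorem applied to the independent Brownian motion marks $\xi^{(j)}_i$, the collection $\Phi_j'$ is itself a Poisson point process with intensity $\Lambda(x)$; hence $|\Phi_j'|$ has the same law as $N_j$. Applying the marking theorem jointly to the pairs (node, Brownian path), we see that conditional on $|\Phi_j'|=n$, the pairs $(X^{(j)}_i,\xi^{(j)}_i)$ with $X^{(j)}_i\in\Phi_j'$ are i.i.d.; the marginal density of each $X^{(j)}_i$ is $\Lambda(x)/\Lambda(\R^d)$, and conditional on $X^{(j)}_i=x$, the path $\xi^{(j)}_i$ is a standard Brownian motion conditioned on $x+\xi$ entering $B(0,r)$ before time $t$. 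This is exactly the joint law of $(X_\ell,\xi_\ell)$ used to define $S_\ell$.

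Finally, I would handle the randomness of $I_j$ by conditioning: since $I_j$ is measurable with respect to $\Phi_1,\ldots,\Phi_{j-1}$ and the associated Brownian motions, and $\Phi_j$ together with its marks is independent of this $\sigma$-algebra, the distributional identification above holds conditionally on $I_j$. Therefore, conditionally on $I_j$, the right-hand side of the displayed inequality has exactly the same distribution as $\sum_{\ell=1}^{N_j}S_\ell$, and integrating out $I_j$ yields the stated inequality. The only subtlety is keeping track of the conditioning so that $I_j$ is treated as deterministic when invoking the marking theorem; no real obstacle is expected.
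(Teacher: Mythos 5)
Your proposal is correct and follows essentially the same route as the paper: bound $|Z_j|$ by the sum of occupation times of the detecting nodes via subadditivity, then identify that sum in law with $\sum_{\ell=1}^{N_j}S_\ell$ using the thinning/marking structure of $\Phi_j'$. Your explicit conditioning on $I_j$ (which is measurable with respect to $\Phi_1,\ldots,\Phi_{j-1}$ and independent of $\Phi_j$) is a point the paper leaves implicit, but it is the right justification.
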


\begin{proof}[{\bf Proof}]
   Let
   $M_j=|\Phi'_j(\R^d)|$ be the total number of nodes of $\Phi_j'$. Then $M_j$ is a Poisson random variable of mean
   \begin{equation}
      \E{M_j} = \frac{\lambda}{K}\int_{\R^d} \pr{x\in \bigcup_{s \leq t} B(\xi(s),r)} \,dx
      = \frac{\lambda}{K} \E{\vol{W_0(t)}},
      \label{eq:intensity}
   \end{equation}
   where $W_0(t)$ is the Wiener sausage as defined above. Hence, $M_j$ has the same law as $N_j$.

   Let $\Phi_j' = \{X_1',\ldots, X_{M_j}' \}$. Then the positions of the nodes $X_i'$ are independent and distributed according to $\frac{\Lambda(x)}{\Lambda(\R^d)}$. For each $\ell\in\{1,2,\ldots,M_j\}$ we define
   $T_\ell$ to be the time within $I_j$ that node $X_\ell'$ spends in $B(0,r)$, i.e.
   \[
   T_\ell = \int_{I_j} \ind{X_\ell' + \xi'_\ell(s) \in B(0,r)} \, ds,
   \]
   where $\xi'_\ell$ has the distribution of a Brownian motion conditioned on $X'_\ell + \xi'_\ell$ hitting the ball $B(0,r)$ before time $t$.

   We have that $|Z_j|$ is no larger than the sum $T_1 + \ldots + T_{M_j}$, which gives that
   \begin{equation}
      \pr{|Z_j| > \frac{t}{2^j}} \leq \pr{\sum_{\ell=1}^{M_j} T_\ell > \frac{t}{2^j}}.
      \label{eq:step1}
   \end{equation}
   By the definition of $N_j$ and $S_\ell$, for all $\ell$, we deduce that $\sum_{\ell=1}^{{N}_j}S_\ell$ has the same law as $\sum_{\ell=1}^{M_j}T_\ell$ and this together
   with~\eqref{eq:step1} concludes the proof.
   \end{proof}

   \begin{remark}\label{rem:station}
   \rm{
   By standard properties of Poisson processes, the process $\{X_i^{(j)} + \xi_i^{(j)}(s) \}_i$ is a Poisson point process of intensity $\lambda$, for every $s$ (see e.g.~\cite{vandenberg}).
   Using that fact and Fubini's theorem we have
   \begin{align}\label{eq:wald2}
      \E{\sum_{\ell=1}^{M_j} T_\ell} = \int_{I_j} \E{\sum_{i}\ind{X_i^{(j)}+\xi_i^{(j)}(s) \in B(0,r)}} \, ds
      = \frac{\lambda \omega_d r^d |I_j|}{K},
   \end{align}
   where $\omega_d$ stands for the volume of the unit ball in $\R^d$.
   Also, by the equality in law mentioned in the proof of Lemma~\ref{lem:domination} and independence we get
   \begin{align}\label{eq:wald}
     \E{\sum_{\ell=1}^{M_j} T_\ell} =\E{\sum_{\ell=1}^{{N}_j} {S}_\ell} = \E{{N}_j} \E{{S}_1}.
   \end{align}
   }
   \end{remark}

   We now introduce a sequence of i.i.d.\ random variables given by
   \[
   Y_\ell = \frac{S_\ell - \E{S_\ell}}{\Psi_d(t)}, \text{ for all $\ell=1,2,\ldots$}
   \]
We emphasize that the random variables $(S_\ell)$ and $(Y_\ell)$ depend on $t$.

   \begin{lemma}\label{lem:expon}
   There exists a positive constant $\gamma$ such that
   \[
   \sup_{t\geq 0}\E{e^{\gamma Y_1}} \leq C,
   \]
   where $C$ is a positive finite constant.
   \end{lemma}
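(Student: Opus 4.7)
The plan is to reduce $S_1$ to the occupation time of an unconditioned Brownian motion in $B(0,r)$ and then invoke a Khas'minskii-type moment generating function bound. The scaling by $\Psi_d(t)$ is natural because the expected occupation time of a free Brownian motion in a fixed ball up to time $t$ is of order $\Psi_d(t)$ in every dimension.

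First I would use the strong Markov property to strip off the conditioning. Let $\tau = \inf\{s\ge 0 : X_1 + \xi_1(s) \in B(0,r)\}$; by the definition of $\xi_1$ we have $\tau \le t$ almost surely. Since the indicator vanishes on $[0,\tau)$ and $I_j \subseteq [0,t]$,
\begin{equation*}
S_1 \le \int_\tau^t \ind{X_1+\xi_1(s)\in B(0,r)}\,ds = \int_0^{t-\tau} \ind{Y(u)\in B(0,r)}\,du,
\end{equation*}
where $Y(u) := X_1 + \xi_1(\tau + u)$. The event on which we condition $\xi_1$, namely that $X_1+\xi_1$ hits $B(0,r)$ before time $t$, is $\mathcal{F}_\tau$-measurable, so by the strong Markov property, conditional on $Y(0) = x \in \partial B(0,r)$, $Y$ is an unconditioned Brownian motion starting at $x$. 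Hence $S_1 \le L_t := \int_0^t \ind{Y(u)\in B(0,r)}\,du$.

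Next I would bound the MGF of $L_t$ uniformly in its starting point. Using the Gaussian heat kernel bound $p_s(x,y)\le (2\pi s)^{-d/2}$ together with $\int_{B(0,r)} p_s(x,y)\,dy\le 1$, and splitting the time integral at $s=r^2$, one obtains a constant $C_d = C_d(r)$ such that $\sup_{x\in\R^d}\Econd{L_t}{Y(0)=x} \le C_d\,\Psi_d(t)$ for all $t\ge 1$. Khas'minskii's lemma states that if $V\ge 0$ satisfies $\sup_x\Econd{\int_0^t V(Y_s)\,ds}{Y(0)=x}\le \alpha<1$, then $\sup_x\Econd{\exp(\int_0^t V(Y_s)\,ds)}{Y(0)=x}\le (1-\alpha)^{-1}$; the proof is by expanding the exponential and iterating the Markov property. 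Applying it with $V(y) = (\gamma/\Psi_d(t))\ind{y\in B(0,r)}$ for any $\gamma<1/C_d$ yields
\begin{equation*}
\sup_{x\in\R^d}\Econd{\exp\!\left(\frac{\gamma L_t}{\Psi_d(t)}\right)}{Y(0)=x} \le \frac{1}{1-\gamma C_d},
\end{equation*}
uniformly in $t\ge 1$.

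Combining the two steps by the tower property gives $\E{\exp(\gamma S_1/\Psi_d(t))}\le (1-\gamma C_d)^{-1}$, and since $S_1\ge 0$ forces $e^{-\gamma\E{S_1}/\Psi_d(t)}\le 1$, we conclude $\sup_{t\ge 1}\E{e^{\gamma Y_1}}\le (1-\gamma C_d)^{-1}$ for every $\gamma<1/C_d$; the range $t\in[0,1]$ is handled trivially since there $S_1$ and $\E{S_1}$ are uniformly bounded. The main subtlety lies in the first step: one must verify that the conditioning event defining the law of $\xi_1$ is genuinely $\mathcal{F}_\tau$-measurable, so that the strong Markov property at $\tau$ legitimately produces an unconditioned Brownian motion afterwards. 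Once that reduction is in place, the heat-kernel estimate and Khas'minskii's lemma finish the job routinely.
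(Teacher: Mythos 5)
Your proof is correct and follows essentially the same route as the paper: reduce $S_1$ to the occupation time of an \emph{unconditioned} Brownian motion started on $\partial B(0,r)$ via the strong Markov property at the hitting time, bound its expectation by $c\,\Psi_d(t)$ uniformly in the starting point using the heat kernel, and then upgrade to an exponential moment. The only difference is cosmetic: where you invoke Khas'minskii's lemma, the paper carries out the same iteration by hand, repeatedly applying Markov's inequality and the Markov property to get $\pr{S_1>nc_1\Psi_d(t)}\leq e^{-c_2 n}$.
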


   \begin{proof}[{\bf Proof}]
   Let $\zeta$ be a Brownian motion started according to $\tfrac{\Lambda(x)}{\Lambda(\R^d)}$ and conditioned on hitting $B(0,r)$ before time $t$. Then the construction of $(S_\ell)$ gives that
    ${S}_1$  has the same law as the time that $\zeta$ spends in $B(0,r)$ before time $t$.
   Note that after hitting $\partial B(0,r)$, the process $\zeta$ evolves as an unconditioned Brownian motion.
   For any $x$, if $\xi$ is a standard Brownian motion, then the time $L_x$ in $[0,t]$ that $x+\xi$ spends in the ball $B(0,r)$ satisfies
   \begin{align}\label{eq:expect1}
   \E{L_x} = \E{\int_{0}^{t} \ind{x+\xi(s) \in B(0,r)} \, ds} \leq 1+ \int_{1}^t \int_{B(0,r)}\frac{1}{(2\pi s)^{d/2}}\,dy\,ds \leq c_1\Psi_d(t),
   \end{align}
   for some positive constant $c_1$. By rotational invariance of Brownian motion we have that
   \begin{align}\label{eq:stochdom}
    S_1 \text{ is stochastically dominated by } L_x, \text{ for any $x$ on the boundary of $B(0,r)$}.
   \end{align}
   Using this, we will show that there exists a positive constant $c_2$ such that for all $n \geq 1$
   \begin{align}\label{eq:expon}
   \pr{S_1>n c_{1}\Psi_d(t)} \leq e^{-c_2 n}.
   \end{align}
 Before showing~\eqref{eq:expon}, we explain how we use it to prove the lemma. From the definition of $Y_1$ we get that for all $n$
 \[
 \pr{Y_1 > c_1 n} \leq e^{-c_2 n}.
 \]
   This shows that this exponential tail bound is independent of $t$, and hence there exists a $\gamma>0$ such that $\sup_{t\geq 0}\E{e^{\gamma Y_1}} \leq C< \infty$.
   
   In order to show~\eqref{eq:expon}, note that, by \eqref{eq:expect1}, \eqref{eq:stochdom} and Markov's inequality, we have
   \[
   \pr{{S}_1>2 c_1\Psi_d(t)} \leq \frac{1}{2}.
   \]
   We now condition on $\{{S}_1>2c_1\Psi_d(t)\}$.
   After ${X}_1+{\xi}_1$ has spent $2c_1\Psi_d(t)$ time inside
   $B(0,r)$, let $x$ be its position at that time. Then on the event $\{{S}_1>2c_1\Psi_d(t)\}$, we have that ${S}_1 - 2c_1\Psi_d(t)$ is stochastically dominated by $L_x$. Using the fact that~\eqref{eq:expect1} holds for all $x$
   and applying Markov's inequality once more, we obtain that the probability that ${X}_1+{\xi}_1$ spends
   an additional amount of $2c_1\Psi_d(t)$ time inside $B(0,r)$ is again at most $1/2$; that is,
   $$
      \prcond{S_1>4 c_1\Psi_d(t)}{S_1>2 c_1\Psi_d(t)} \leq \frac{1}{2}.
   $$
   Thus, by iterating $n/2$ times, we establish~\eqref{eq:expon}.
   \end{proof}

  \begin{lemma}\label{lem:chernoff}
  For all sufficiently large $K$, there exists a positive constant $c$ so that
   \begin{align}\label{eq:2goal}
   \pr{\sum_{\ell=1}^{4\E{{N}_j}}Y_\ell > \frac{t}{\Psi_d(t)2^{j+1}}} \leq \exp\left(-\frac{c t}{\Psi_d(t)} \right).
   \end{align}
  \end{lemma}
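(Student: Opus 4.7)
My plan is to apply the classical Chernoff (exponential moment) bound to the i.i.d.\ centered sum $\sum_{\ell=1}^{4\E{N_j}} Y_\ell$. By the definition $Y_\ell = (S_\ell - \E{S_\ell})/\Psi_d(t)$ we have $\E{Y_\ell}=0$, and Lemma~\ref{lem:expon} supplies constants $\gamma,C>0$, independent of $t$, with $\E{e^{\gamma Y_1}} \leq C$. The $(Y_\ell)$ are therefore i.i.d.\ mean-zero random variables with uniformly bounded exponential moments, the classical Cram\'er--Chernoff setting.

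The first step is to promote the exponential moment bound to a quadratic estimate for the log-moment generating function near the origin. Combining the elementary inequality $e^u \leq 1 + u + u^2 e^{|u|}$ with $\E{Y_1}=0$ and Lemma~\ref{lem:expon}, one extracts constants $\eta_0, c_4>0$, depending only on $\gamma$ and $C$ (hence independent of $t$), such that
\[
\log \E{e^{\eta Y_1}} \leq c_4 \eta^2 \qquad \text{for all } \eta \in [0,\eta_0].
\]
Writing $n := 4\E{N_j}$ and $\tau := t/(\Psi_d(t) 2^{j+1})$, the exponential Markov inequality together with independence then yields, for every $\eta\in(0,\eta_0]$,
\[
\pr{\sum_{\ell=1}^{n} Y_\ell > \tau} \leq \exp\bigl(-\eta \tau + c_4 \eta^2 n\bigr).
\]

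Finally I would estimate $n$ and optimize in $\eta$. Combining \eqref{eq:lambda} with the Wiener-sausage asymptotic \eqref{eq:wienervol} gives $n \leq C_K\, t/\Psi_d(t)$ for all sufficiently large $t$, with $C_K = O(1/K)$ as $K \to \infty$. The exponent therefore factors as
\[
\frac{t}{\Psi_d(t)}\Bigl(-\frac{\eta}{2^{j+1}} + c_4 C_K \eta^2\Bigr),
\]
and a suitable choice of $\eta$ (either the unconstrained minimizer $\eta = (2^{j+2} c_4 C_K)^{-1}$, if this value lies in $[0,\eta_0]$, or $\eta = \eta_0$ otherwise) renders the bracketed term a strictly negative constant depending only on $d,r,\lambda$ and $K$. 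This produces the bound $\exp(-ct/\Psi_d(t))$ claimed in \eqref{eq:2goal}. The main point requiring care is that, uniformly for $j\in\{1,\ldots,K\}$, the chosen $\eta$ makes the linear term in $\eta$ dominate the quadratic one; since $K$ is a fixed sufficiently large constant, this amounts to routine bookkeeping with constants, and is precisely where the hypothesis ``$K$ sufficiently large'' enters (via the $1/K$ factor in $C_K$).
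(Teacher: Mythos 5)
Your argument is correct and follows essentially the same route as the paper: a Chernoff bound combined with a uniform-in-$t$ quadratic estimate $\log \E{e^{\eta Y_1}} \leq c_4 \eta^2$ near $\eta=0$ (the paper obtains this by bounding $\phi''(\theta)$ for $\phi(\theta)=\log\E{e^{\theta Y_1}}$ rather than via $e^u \leq 1+u+u^2e^{|u|}$, but the content is identical), followed by optimization in $\eta$ using $\E{N_j}=O(t/(K\Psi_d(t)))$ so that the linear term dominates once $K$ is large. The one point to make explicit is that controlling $\E{Y_1^2 e^{\eta |Y_1|}}$ on $\{Y_1<0\}$ needs the deterministic lower bound $Y_1 \geq -\E{S_1}/\Psi_d(t) \geq -c_1$ (immediate from $S_1\geq 0$ and \eqref{eq:expect1}--\eqref{eq:stochdom}), since Lemma~\ref{lem:expon} alone controls only the positive tail of $Y_1$.
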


  \begin{proof}[{\bf Proof}]
   
    Let $\theta>0$. Since the random variables $Y_\ell$ are independent, by Chernoff's inequality, we obtain
   \begin{align*}
      &\pr{\sum_{\ell=1}^{4\E{{N}_j}}Y_\ell > \frac{t}{\Psi_d(t)2^{j+1}}}
      \leq \left(\E{e^{\theta Y_1}}\right)^{4\E{{N}_j}} \exp\left(-\frac{\theta t}{\Psi_d(t)2^{j+1}}\right) \\
      & = \exp\left( -\frac{\theta t}{\Psi_d(t)2^{j+1}} + 4\E{{N}_j} \log \E{e^{\theta Y_1}}  \right)
      \leq \exp\left( -\frac{\theta t}{\Psi_d(t)2^{j+1}} + \frac{4c_2\lambda t}{K\Psi_d(t)} \log \E{e^{\theta Y_1}}  \right).
   \end{align*}
   We set $\phi(\theta) = \phi_t(\theta) = \log \E{e^{\theta Y_1}}$, for $\theta \leq \gamma$.
   By the existence of the exponential moment of $Y_1$ (cf.~Lemma~\ref{lem:expon}) and the dominated convergence theorem, we get that $\phi$ is differentiable and its derivative is given by
   \[
   \phi'(\theta) = \frac{\E{Y_1 e^{\theta Y_1}}}{\E{e^{\theta Y_1}}}.
   \]
   Also, $\phi'(0)=\E{Y_1}=0$, and again using the existence of the exponential moment of $Y_1$ and the dominated convergence theorem,
   we have that $\phi'$ is differentiable with derivative given by
   \[
   \phi''(\theta) = \frac{\E{Y_1^2 e^{\theta Y_1}} \E{e^{\theta Y_1}} - \E{Y_1 e^{\theta Y_1}}^2 }{\E{e^{\theta Y_1}}^2}.
   \]
   We will now show that there exists a positive constant $c_2$ such that uniformly over all $t$
   \begin{align}\label{eq:secderiv}
   \phi''(\theta) \leq c_2, \ \text{ for all } \theta\leq \gamma/2.
   \end{align}
   Note that by the definition of $Y_1$ and~\eqref{eq:expect1} and~\eqref{eq:stochdom}, we get 
   $   Y_1 \geq - \frac{\E{S_1}}{\Psi_d(t)} \geq -c_1$.
   Using this when $Y_1 \leq 0$ and the fact that the function $y^2 e^{-\gamma y/4}$ for $y>0$ is maximized at $y=8/\gamma$, we have
   \[
   \E{Y_1^2 e^{\theta Y_1}} \leq c_1^2 + \frac{64}{\gamma^2} \E{e^{(\gamma/4 +\theta)Y_1}}.
   \]
   By Jensen's inequality and the fact that $\E{Y_1}=0$ we obtain
   \[
   \E{e^{\theta Y_1}} \geq \exp(\theta\E{Y_1}) = 1.
   \]
   Thus, Lemma~\ref{lem:expon} and the above two inequalities prove~\eqref{eq:secderiv}.
   
   Since $\phi'(0)=0$ and $\phi''$ is continuous (which follows again by the dominated convergence theorem) we get 
   \[
   |\phi'(\theta)| \leq c_2 \theta, \ \text{ for all } \theta <\gamma/2.
   \]
   Also, since $\phi(0)=0$, we obtain
   \[
   |\phi(\theta)| \leq c_2 \theta^2/2,
   \]
   and hence, we get that there exists $\delta$ small enough such that uniformly for all $t$
   \[
   |\phi(\delta)| \leq 2^{-j-1} \delta.
   \]
   Thus, putting everything together we have 
   \begin{align*}
      &\pr{\sum_{\ell=1}^{4\E{{N}_j}}Y_\ell > \frac{t}{\Psi_d(t)2^{j+1}}}
      \leq \exp\left( -\frac{\delta t}{\Psi_d(t)2^{j+1}} + \frac{4c_2\lambda t}{K\Psi_d(t)}2^{-j-1}\delta \right)\\
      &= \exp\left( -\frac{\delta t}{\Psi_d(t)2^{j+1}}\left(1 - \frac{4c_2\lambda}{K}\right)\right).
   \end{align*}
  Taking now $K$ large enough establishes~\eqref{eq:2goal}.
    \end{proof}

   \begin{proof}[{\bf Proof of Lemma~\ref{lem:iteration}}]

   It only remains to show that there exists a positive constant $c_1$ such that
   \begin{align}\label{eq:finalgoal}
      \pr{\sum_{\ell=1}^{{N}_j}{S}_\ell>\frac{t}{2^j}} \leq \exp(-c_1t/\Psi_d(t)),
   \end{align}
   which together with Lemma~\ref{lem:domination} concludes the proof of Lemma~\ref{lem:iteration}. We can write
   \begin{align*}
      &\pr{\sum_{\ell=1}^{{N}_j}{S}_\ell>\frac{t}{2^j}}
      \leq \pr{\sum_{\ell=1}^{{N}_j}{S}_\ell>\frac{t}{2^j}, {N}_j < 4\E{{N}_j} }+ \pr{{N}_j \geq 4\E{{N}_j}}\\
&\leq \pr{\sum_{\ell=1}^{4\E{{N}_j}}({S}_\ell - \E{S_\ell}) >\frac{t}{2^j} - 4\E{N_j}\E{S_1}} +       \pr{{N}_j \geq 4\E{{N}_j}}\\      
      &\leq \pr{\sum_{\ell=1}^{4\E{{N}_j}}Y_\ell > \frac{t}{2^{j}\Psi_d(t)}\left(1-8\lambda \omega_dr^d/K \right)} + \exp\left(-2 \E{{N}_j}\right),
   \end{align*}
   where the first term on the right-hand side above follows from~\eqref{eq:wald2} and~\eqref{eq:wald} and the fact that $|I_j| \leq \tfrac{t}{2^{j-1}}$. The last term
   follows by applying the Chernoff bound to the Poisson random variable $N_j$.

   If we now choose $K$ large enough we can make $\left(1-8\lambda \omega_d r^d/K \right)$ larger than $1/2$,
   and hence using Lemma~\ref{lem:chernoff} we get the desired tail probability bound, since $\E{{N}_j} = \Theta(t/\Psi_d(t))$ by~\eqref{eq:lambda} and~\eqref{eq:wienervol}.
\end{proof}

\section{Lower bound in $d=1$}\label{sec:lower}
\begin{proof}[{\bf Proof of Theorem~\ref{thm:lower}}]
We want to show that
\[
\pr{\tisol>t} \geq \exp\left(-c\sqrt{t}\log t \log \log t\right),
\]
for some positive constant $c$.
Instead of looking at the interval $[0,t]$, we consider the interval $[t,2t]$ and analyze the event that the origin is
detected throughout $[t,2t]$. Clearly, due to stationarity, this is equivalent to the event $\{\tisol > t\}$.

Now, consider the interval of length $\sqrt{t}$ centered at the origin.
Let $M$ be the set of nodes of $\Pi_0$ that fall in this interval at time $0$. Then, the number of nodes in $M$, which we denote by
$|M|$, is given by a Poisson random variable
with mean $\lambda \sqrt{t}$. Let $C>0$ be a sufficiently large constant that we will set later. We have that
\begin{align*}
   \pr{|M| \geq C\sqrt{t} \log t}
   &\geq \frac{\exp(-\lambda \sqrt{t})(\lambda\sqrt{t})^{C\sqrt{t}\log t}}{(C\sqrt{t}\log t)!} \\
   &\geq \frac{\exp(-\lambda \sqrt{t})(\lambda)^{C\sqrt{t}\log t}}{(C\log t)^{C\sqrt{t}\log t}}
   \geq \exp\left(-c_1 \sqrt{t} \log t \log \log t\right),
\end{align*}
for some positive constant $c_1$.

We now divide the time interval $[t,2t]$ into $t$ subintervals of length $1$. We fix one such subinterval $[s,s+1]$.
The probability that the origin is detected by a given node of $M$ throughout $[s,s+1]$ is at least $\tfrac{c_2}{\sqrt{t}}$ for some positive constant $c_2$.
To see this, note that the probability that this particular node (which started in the interval $[-\sqrt{t}/2,\sqrt{t}/2]$ at time $0$) detects the origin at time $s$ is $\Theta(\tfrac{1}{\sqrt{t}})$ since $s\in [t,2t]$
and, once this node is inside the ball $B(0,r)$ at time $s$, there is a positive probability that it will stay in $B(0,r)$ for one unit of time.

Then, for any given subinterval $[s,s+1]$, we have
\begin{align*}
   &\prcond{\text{no node of $M$ detects the origin throughout $[s,s+1]$}}{|M|\geq C\sqrt{t} \log t} \\
   &\leq \left(1-\frac{c_2}{\sqrt{t}}\right)^{C\sqrt{t} \log t}
   \leq t^{-c_2 C}.
\end{align*}
If for each $s$ there is a node of $M$ detecting the origin throughout $[s,s+1]$, then $\tisol>t$.
Thus, by taking the union bound over all subintervals, we have
\begin{align*}
\prcond{\tisol > t}{|M|\geq C\sqrt{t} \log t}  \geq 1 - t^{-c_2 C+1}.
\end{align*}
Finally,
\begin{align*}
\pr{\tisol>t} &\geq \prcond{\tisol>t}{|M|\geq C\sqrt{t} \log t } \pr{|M| \geq C\sqrt{t} \log t} \\
&\geq (1-t^{-c_2C+1})\exp\left(-c_1 \sqrt{t} \log t \log \log t\right).
\end{align*}
The proof of Theorem~\ref{thm:lower} is then completed by setting $C$ sufficiently large so that $C>1/c_2$.
\end{proof}

\section{Best strategy to avoid isolation}\label{sec:beststrategy}

In this section we prove Theorem~\ref{thm:beststrategy}. The measurability of the event $\{\tisol^D>t\}$ is explained at the end of the section.

In order to prove Theorem~\ref{thm:beststrategy} we first prove a preliminary lemma in the case where time is discrete and 
there is a finite number $k$ of Brownian motions started from uniform points in a big ball. 
Moreover, it will be convenient to generalize the problem so that, instead of having one single collection of sets $(D_s)_s$ for all nodes, we will have one collection of sets for each node. 

\begin{lemma}\label{lem:finitepoints}
Let $(x_i)_{i\leq k}$ be i.i.d.\ uniformly in the ball $B(0,R)$ for some $R>0$ and let $(\xi_i(s))_{i \leq k}$ be independent standard Brownian motions. 
Let $\{U_m^i: m\leq n, i\leq k\}$ be a collection of closed bounded sets in $\R^d$. Then 
\begin{align*}
   &\pr{\forall m=0,\ldots, n, \ \exists i=1,\ldots,k: x_i+\xi_i(m) \in U^i_m } \\
   &\leq \pr{\forall m=0,\ldots, n, \ \exists i=1,\ldots,k: x_i+\xi_i(m) \in B_m^i },
\end{align*}
where $(B_m^i)_{m,i}$ are balls centered at the origin with $\vol{B^i_m} = \vol{U^i_m}$ for all $m$ and $i$. 
\end{lemma}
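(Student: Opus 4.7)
The approach is to symmetrize the sets $U_m^{i}$ one particle $i$ at a time, arguing that each replacement $(U_m^{i_0})_m \mapsto (B_m^{i_0})_m$ only increases the coverage probability. Iterating over $i_0 = 1, \ldots, k$ then yields the lemma.

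Fix $i_0$ and condition on the trajectories $\{(x_j, \xi_j) : j \neq i_0\}$. Under this conditioning, the set of ``uncovered times''
\[
T = \bigl\{ m \leq n : x_j + \xi_j(m) \notin U_m^j \text{ for every } j \neq i_0 \bigr\}
\]
becomes deterministic, and the coverage event collapses to the single-particle event $\{x_{i_0} + \xi_{i_0}(m) \in U_m^{i_0} \text{ for every } m \in T\}$. Crucially, $T$ depends only on the conditioning and not on the sets $U_m^{i_0}$, so the same conditioning and the same $T$ arise when $(U_m^{i_0})_m$ is replaced by $(B_m^{i_0})_m$. After taking expectations over the conditioning it therefore suffices to prove a single-particle comparison.

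The single-particle step is where the main analytic content lies. For any subset $T \subseteq \{0, \ldots, n\}$ and any closed bounded sets $(U_m)_{m \in T}$, I would express the probability
\[
\pr{x + \xi(m) \in U_m \text{ for all } m \in T},
\]
where $x$ is uniform in $B(0,R)$ and $\xi$ is an independent standard Brownian motion, as the multilinear integral
\[
\int_{\R^{d(n+1)}} \rho_R(y_0) \prod_{m=1}^n g(y_m - y_{m-1}) \prod_{m \in T} \mathbf{1}_{U_m}(y_m)\, dy_0 \cdots dy_n,
\]
with $\rho_R \propto \mathbf{1}_{B(0,R)}$ the uniform density and $g$ the standard Gaussian density on $\R^d$. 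Every factor is a non-negative function composed with a linear map $\R^{d(n+1)} \to \R^d$ (either the projection $y_0$, a difference $y_m - y_{m-1}$, or the projection $y_m$), so the multidimensional Brascamp--Lieb--Luttinger rearrangement inequality bounds this integral above by the same integral with each factor replaced by its symmetric decreasing rearrangement. Since $\rho_R$ and $g$ are already symmetric decreasing, the only change is that $\mathbf{1}_{U_m}$ becomes $\mathbf{1}_{B_m}$, and the resulting integral is exactly $\pr{x + \xi(m) \in B_m \text{ for all } m \in T}$, as desired.

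\textbf{Main obstacle.} The one delicate point is the appeal to the multidimensional form of Brascamp--Lieb--Luttinger rather than its one-dimensional original. This extension is standard (obtainable by iterated Steiner symmetrization reducing to the 1D case), but needs to be cited explicitly. The conceptual part of the argument is clean: the uncovered-time set $T$ is measurable with respect to the conditioning on the other $k-1$ particles and is unaffected by the sets belonging to particle $i_0$, which is precisely what allows the single-particle rearrangement to lift to the multi-particle comparison.
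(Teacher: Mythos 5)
Your proposal is correct and follows essentially the same route as the paper: symmetrize one particle at a time, condition on the remaining trajectories so that the uncovered-time set becomes deterministic (the paper encodes this via the stopping times $T_j$), and apply the multidimensional Brascamp--Lieb--Luttinger inequality (\cite[Theorem~1.2]{BrascLiebLutt}, which is exactly the $\R^d$ version you flag) to the resulting single-particle Gaussian integral. The only cosmetic difference is that you keep all intermediate path variables in the integral while the paper marginalizes down to the relevant times; both are valid since the Gaussian kernels are symmetric decreasing.
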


\begin{proof}[{\bf Proof}]
   We now focus on the first node $x_1+\xi_1$ and define a sequence of stopping times as follows. Let $T_0=0$ and
   \[
   T_1 = \inf\{m\geq 0: \forall i=2,\ldots,k\ x_i+\xi_i(m) \notin U_m^i\}.
   \]
   Define inductively
   \[
   T_{j+1} = \inf\{m\geq T_j+1: \forall i=2,\ldots, k \ x_i+\xi_i(m) \notin U_m^i\}.
   \]
   Let $\kappa = \sup\{\ell\geq 0: T_\ell \leq n\}$. Then we have
   \[
   \pr{\forall m=0,\ldots, n, \ \exists i =1,\ldots k: x_i+ \xi_i(m) \in U_m^i} = \E{\prod_{j=1}^{\kappa}\1(x_1+\xi_1(T_j) \in U_{T_j}^1)}.
   \]
   By the independence of the motions of the nodes $1,\ldots, k$ and the Markov property, the right-hand side above can be written as 
   \[
   \E{\int_{\R^d}\cdots \int_{\R^d}\frac{\1(z_0 \in B(0,R))}{\vol{B(0,R)}} \prod_{j=1}^{\kappa} \1(z_j \in U_{T_j}^1) p_{T_j -T_{j-1}}(z_{j-1},z_j)\,dz_0\ldots dz_\kappa },
   \]
   where $p_t(x,y)$ stands for the transition kernel of Brownian motion. 
   Applying the rearrangement inequality as in~\cite[Theorem~1.2]{BrascLiebLutt} to the integral appearing inside the expectation (the transition kernel 
   $p_t(x,y)$ of the Brownian motion is symmetric decreasing as a function of the distance $|x-y|$), 
   we get that this last expression is smaller than
   \[
   \E{\int_{\R^d}\cdots \int_{\R^d}\frac{\1(z_0 \in B(0,R))}{\vol{B(0,R)}} \prod_{j=1}^{\kappa} \1(z_j \in B_{T_j}^1)p_{T_j -T_{j-1}}(z_{j-1},z_j)\,dz_0\ldots dz_\kappa},
   \]
   which is equal to
   \[
   \pr{\forall m=0,\ldots, n, \ \exists i =1,\ldots ,k: x_i+ \xi_i(m) \in 
   V_m^i},
   \]
   where $V_m^i = U_m^i$ for $i=2,\ldots, k$ and $V_m^1 = B_m^1$ for all $m$. 
   
   Continuing in the same way, i.e.\ fixing node $2$ and looking at the times that the other particles, $1,3,4,
   \ldots, k$ do not detect the target before time $n$, we get that this last probability is increased when the sets $V_m^2$
   are replaced by the balls $B_m^2$ for all $m$. Then we apply the same procedure for nodes $3,4,\ldots,k$ and this concludes the proof.
\end{proof}

Before proving Theorem~\ref{thm:beststrategy}, we give some definitions that will be used in the proofs repeatedly.

For $n \in \N$ and $t>0$, define the dyadic rationals of level $n$ as
\[
\mathcal{D}_{n,t}  = \left\{\frac{jt}{2^n}: j=0,\ldots,2^n \right\}.
\]
Let $(U_s)_{s\leq t}$ be closed sets in $\R^d$. 
For each $s$ and $n$, we define the set
\begin{align}\label{eq:defsets}
U_{s,n} = \{z \in \R^d: d(z,U_s) \leq \left(t/2^n\right)^{1/3}\},
\end{align}
which is clearly closed. (The metric $d(x,A)$ stands for the Euclidean distance between the point $x$ and the set $A$.)

For every $\ell \in \mathcal{D}_n$ 
(we drop the dependence on $t$ from $\D_{n,t}$ to simplify notation), we will define a set $\til{U}_{\ell,n}$ as follows. For each such $\ell$ take $s = s(\ell) \in [\ell,\ell+t/2^n)$ such that 
\[
\vol{U_{s,n}} \leq \inf_{u \in [\ell,\ell+t/2^n)} \vol{U_{u,n}} +\frac{1}{n}.
\]
We now define $\widetilde{U}_{\ell,n} = U_{s(\ell),n}$
and 
finally for every $n$ and $i$ we let
\begin{align}\label{eq:defomega}
&\Omega_{n,i} = \left\{ \forall h \leq t/2^n: \sup_{s,u: |s - u|\leq h} \|\xi_i(s) - \xi_i(u) \| \leq \frac{h^{1/3}}{2}\right \}.
\end{align}

\begin{lemma}\label{lem:discreteapprox}
Let $(U_s)_{s\leq t}$ be closed sets in $\R^d$ that are uniformly bounded; i.e., there exists $L_t>0$ such that $\cup_{s\leq t}U_s \subseteq B(0,L_t)$. Then, with the definitions given above, we have that, almost surely,
\begin{align*}
\{\forall s\leq t, \ \exists i: X_i+\xi_i(s) \in U_s\} \subseteq \bigcup_{n_0}\bigcap_{n\geq n_0}\{\forall \ell \in \D_n, \ \exists i: X_i + \xi_i(\ell) \in \til{U}_{\ell,n} \}.
\end{align*}
\end{lemma}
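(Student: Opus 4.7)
The plan is to use the Brownian paths' uniform continuity to transfer the detection statement in $E=\{\forall s\leq t,\ \exists i: X_i+\xi_i(s)\in U_s\}$ from every time $s$ to the dyadic times $\ell\in\D_n$, after paying the price of slightly fattening the target sets. The key point is that for each $\ell$ the ``witness'' particle used at the chosen time $s(\ell)\in[\ell,\ell+t/2^n)$ can be re-used at $\ell$, because it cannot have moved by more than $(t/2^n)^{1/3}/2$ in the intervening time, and this is precisely the amount of fattening that turns $U_{s(\ell)}$ into $\widetilde U_{\ell,n}$. The only subtlety is that we need a single random $n_0$ beyond which the modulus-of-continuity event~\eqref{eq:defomega} holds for \emph{every} potential witness simultaneously.

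To handle that, I would first reduce to finitely many relevant nodes. Since $\cup_{s\leq t}U_s\subseteq B(0,L_t)$, a node $i$ can be a witness only if its trajectory enters $B(0,L_t)$ during $[0,t]$; call the set of such nodes $\mathcal{I}$. By thinning, the intensity of $\mathcal{I}$ at $x$ is $\lambda\pr{\inf_{s\leq t}\|x+\xi(s)\|\leq L_t}$, which decays like a Gaussian in $\|x\|$ for $\|x\|\gg L_t$ by the reflection principle. Integrating over $\R^d$ shows $\E{|\mathcal{I}|}<\infty$, so $\mathcal{I}$ is almost surely finite. For each $i\in\mathcal{I}$, L\'evy's modulus of continuity gives $\sup_{|s-u|\leq h}\|\xi_i(s)-\xi_i(u)\|=O\bigl(\sqrt{h\log(1/h)}\bigr)$ as $h\to 0$, which is eventually dominated by $h^{1/3}/2$. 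Hence there is a random $n_0(\omega)$ such that on $\bigcap_{n\geq n_0}\bigcap_{i\in\mathcal{I}}\Omega_{n,i}$ the modulus bound holds uniformly in $i\in\mathcal{I}$.

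Now fix a sample in $E\cap\bigcap_{n\geq n_0}\bigcap_{i\in\mathcal{I}}\Omega_{n,i}$ and any $n\geq n_0$, $\ell\in\D_n$. By $E$, at time $s(\ell)$ some node satisfies $X_i+\xi_i(s(\ell))\in U_{s(\ell)}\subseteq B(0,L_t)$, so $i\in\mathcal{I}$. Since $|s(\ell)-\ell|\leq t/2^n$, the event $\Omega_{n,i}$ gives $\|\xi_i(s(\ell))-\xi_i(\ell)\|\leq\tfrac{1}{2}(t/2^n)^{1/3}<(t/2^n)^{1/3}$, so $d(X_i+\xi_i(\ell),U_{s(\ell)})\leq (t/2^n)^{1/3}$, and therefore $X_i+\xi_i(\ell)\in U_{s(\ell),n}=\widetilde U_{\ell,n}$ by~\eqref{eq:defsets}. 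Thus, apart from a null set, every sample in $E$ lies in $\bigcap_{\ell\in\D_n}\{\exists i: X_i+\xi_i(\ell)\in\widetilde U_{\ell,n}\}$ for all $n\geq n_0$, which is exactly the required inclusion. The main obstacle is the uniform-in-$i$ control of the Brownian increments; the finiteness of $\mathcal{I}$ is what makes it possible, and once that is established the pointwise argument above is routine.
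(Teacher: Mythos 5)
Your proof is correct and follows essentially the same route as the paper's: reduce to an almost surely finite set of potentially relevant nodes via the thinning property (the paper phrases this as $\pr{F_n}\to 0$ for nodes started outside $B(0,n)$, you phrase it as $\E{|\mathcal I|}<\infty$, but both rest on the finite expected Wiener-sausage volume), invoke L\'evy's modulus of continuity to get the events $\Omega_{n,i}$ simultaneously for all relevant $i$ and all $n\geq n_0$, and then transfer the witness at time $s(\ell)$ to the dyadic time $\ell$ using the $(t/2^n)^{1/3}$ fattening built into $\widetilde U_{\ell,n}$. No gaps.
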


\begin{proof}[{\bf Proof}]
We first notice that, almost surely,
\begin{align}\label{eq:limitpoisson}
\{\forall s\leq t, \ \exists i: X_i+\xi_i(s) \in U_{s}\} =\cup_{R} \{\forall s\leq t, \ \exists i =1,\ldots N_R: X_i + \xi_i(s) \in U_s\},
\end{align}
where $N_R$ is the number of nodes of the Poisson process that started in the ball $B(0, R)$, so $N_R$ is a Poisson random variable of parameter $\lambda \vol{B(0,R)}$. Indeed,
if $F_n$ denotes the event that some node that started outside the ball $B(0,n)$ detects the target before time $t$, then we will show that 
\[
\pr{F_n} \to 0 \ \text{ as } n\to \infty.
\]
Let $\Phi_n$ be the point process defined as follows
\[
\Phi_n = \{X_i \in \Pi_0: X_ i \notin B(0,n) \text{ and } \exists s \leq t: X_i +\xi_i(s) \in U_s \}.
\]
Then, by the thinning property of Poisson processes, $\Phi_n$ is a Poisson process of total intensity
\[
\E{\Phi_n(\R^d)} = \lambda \E{\vol{\cup_{s\leq t} \left( \xi(s) + U_s\right) \cap B(0,n)^c  }},
\]
where $(\xi(s))_s$ is a Brownian motion starting from the origin.
Clearly, by Markov's inequality, we have
\[
\pr{F_n} = \pr{\Phi_n(\R^d) \geq 1} \leq \E{\Phi_n(\R^d)}.
\]
Since for all $s\leq t$ the sets $U_s$ are contained in $B(0,L_t)$, we have
\[
\cup_{s\leq t} \left( \xi(s) + U_s\right) \subseteq \cup_{s\leq t} \left( \xi(s) + B(0,L_t)\right).
\]
As $n\to \infty$, by dominated convergence, we have that 
\[
\E{\vol{\cup_{s\leq t} \left( \xi(s) + U_s\right) \cap B(0,n)^c  }} \to 0,
\]
since $\vol{\cup_{s\leq t} \left( \xi(s) + U_s\right) \cap B(0,n)^c  } \leq \vol{\cup_{s\leq t} \left( \xi(s) + B(0,L_t)\right)}$ and the latter has finite expectation given by~\eqref{eq:wienervol} for $r=L_t$. 

We will now show that, on the event $\cap_i \cup_n \Omega_{n,i}$, the following holds for all $k$:
\begin{align}\label{eq:goalmeasurability}
\{\forall s\leq t, \ \exists i \leq k: X_i + \xi_i(s) \in U_s\} \subseteq \bigcup_{n_0}\bigcap_{n\geq n_0}\{\forall \ell \in \D_n, \ \exists i\leq  k: X_i + \xi_i(\ell) \in \til{U}_{\ell,n} \}.
\end{align}
Take $n_0$ large enough so that $\Omega_{n,i}$ holds for all $n\geq n_0$ and all $i=1,\ldots, k$ (since the sets $\Omega_{n,i}$ are increasing in $n$). 
We want to show that, for all $\ell \in \D_n$, there exists $i=1,\ldots,k$ for which $X_i + \xi_i(\ell) \in \til{U}_{\ell,n}$. Take $i$ such that $X_i + \xi_i(s(\ell)) \in  U_{s(\ell)}$. Then we have
\begin{align*}
d(X_i + \xi_i(\ell), U_{s(\ell)}) \leq d(\xi_i(\ell),\xi_i(s(\ell))) + d(X_i + \xi_i(s(\ell)), U_{s(\ell)}) \leq \frac{1}{2} (t/2^n)^{1/3} < (t/2^n)^{1/3},
\end{align*}
since $U_{s(\ell)}$ is a closed set.

%

By the same reasoning that led to~\eqref{eq:limitpoisson} we get that, almost surely,
\[
\cup_R \cup_{n_0} \cap_{n\geq n_0}\{\forall \ell \in \D_n, \ \exists i=1,\ldots, N_R: X_i + \xi_i(\ell) \in \til{U}_{\ell,n} \} 
= \cup_{n_0} \cap_{n\geq n_0}\{\forall \ell \in \D_n, \ \exists i: X_i + \xi_i(\ell) \in \til{U}_{\ell,n} \}.
\]
This together with the fact that $\pr{\cap_i\cup_n \Omega_{n,i}} =1$, which follows from L\'evy's modulus of continuity theorem (see for instance \cite[Theorem~1.14]{BM}), concludes the proof of the lemma.
\end{proof}

\begin{proof}[{\bf Proof of Theorem~\ref{thm:beststrategy}}]

By Lemma~\ref{lem:discreteapprox} we have that
\begin{align*}
\pr{\forall s\leq t, \ \exists i: X_i+\xi_i(s) \in D_s} \leq \lim_{n_0\to \infty} \lim_{n_1\to \infty}\pr{\bigcap_{n=n_0}^{n_1} \{\forall \ell \in \D_n, \ \exists i: X_i+\xi_i(\ell) \in \til{D}_{\ell,n} \} }.
\end{align*}
Since the sets $(D_s)$ are uniformly bounded, by the same reasoning that led to~\eqref{eq:limitpoisson} we get
\begin{align*}
\bigcap_{n=n_0}^{n_1} \{\forall \ell \in \D_n, \ \exists i: X_i+\xi_i(\ell) \in \til{D}_{\ell,n} \}  
= \bigcup_{R} \bigcap_{n=n_0}^{n_1} \{\forall \ell \in \D_n, \ \exists X_i \in \Pi_0 \cap B(0,R): X_i+\xi_i(\ell) \in \til{D}_{\ell,n}\}. 
\end{align*}
Let $N_R$ be the number of nodes of the Poisson process $\Pi_0$ that are in $B(0,R)$. Then $N_R$ is a Poisson random variable of parameter $\alpha=\lambda \vol{B(0,R)}$. 
If we condition on $N_R$, then by standard properties of Poisson processes, we get that the positions of the nodes $X_i$ are independent and uniformly distributed in $B(0,R)$. 
So we obtain 
\begin{align*}
&\pr{\bigcap_{n=n_0}^{n_1} \{\forall \ell \in \D_n, \ \exists X_i \in \Pi_0 \cap B(0,R): X_i+\xi_i(\ell) \in \til{D}_{\ell,n}\}} 
\\
&=
\sum_{k = 0}^{\infty} e^{-\alpha} \frac{\alpha^k}{k!} \pr{\bigcap_{n=n_0}^{n_1} \{\forall \ell \in \D_n, \ \exists i=1,\ldots,k : x_i+\xi_i(\ell) \in \til{D}_{\ell,n} \} },
\end{align*}
where the $x_i$'s are i.i.d.\ uniformly in the ball $B(0,R)$.

Using Lemma~\ref{lem:finitepoints} we deduce that, for all $k$,
\begin{align*}
   \pr{\bigcap_{n=n_0}^{n_1} \{\forall \ell \in \D_n, \ \exists i\leq k : x_i+\xi_i(\ell) \in \til{D}_{\ell,n} \} } 
   \leq \pr{\bigcap_{n=n_0}^{n_1} \{\forall \ell \in \D_n, \ \exists i\leq k : x_i+\xi_i(\ell) \in B(0,\til{r}_{\ell,n}) \} },
\end{align*}
where $\til{r}_{\ell,n}$ satisfies $\vol{B(0,\til{r}_{\ell,n})} = \vol{\til{D}_{\ell,n}}$. 
\newline
Thus if $r_{s,n}$ is such that $\vol{B(0,r_{s,n})} = \vol{D_{s,n}}$, then for every $s \in [\ell,\ell+t/2^n)$
\begin{align}\label{eq:rlnineq}
\til{r}_{\ell,n} \leq  r_{s,n}\left( 1 + \frac{1}{r_{s,n}^d c(d)n}\right)^{1/d},
\end{align}
where $c(d)$ is a constant that depends only on the dimension.

Hence we get 
\begin{align*}
\pr{\bigcap_{n=n_0}^{n_1} \{\forall \ell \in \D_n, \ \exists i: X_i+\xi_i(\ell) \in \til{D}_{\ell,n} \} } \leq \pr{\bigcap_{n=n_0}^{n_1} \{\forall \ell \in \D_n, \ \exists i: X_i+\xi_i(\ell) \in B(0,\til{r}_{\ell,n}) \} }
\end{align*}
and thus
\begin{align*}
\pr{\forall s\leq t, \ \exists i: X_i+\xi_i(s) \in D_s} \leq \pr{\bigcup_{n_0} \bigcap_{n\geq n_0} \{\forall \ell \in \D_n, \ \exists i: X_i +\xi_i(\ell) \in B(0,\til{r}_{\ell,n}) \}}.
\end{align*}
Now it only remains to show that a.s.\
\begin{align}\label{eq:goalballs}
 \{ \forall s\leq t, \ \exists i: X_i+\xi_i(s) \in B_s\} = \bigcup_{n_0} \bigcap_{n\geq n_0} \{\forall \ell \in \D_n, \ \exists i: X_i +\xi_i(\ell) \in B(0,\til{r}_{\ell,n}) \}.
\end{align}
In the notation introduced before Lemma~\ref{lem:discreteapprox} we have $B(0,\til{r}_{\ell,n}) = \til{B}_{\ell,n}$. 
Then applying Lemma~\ref{lem:discreteapprox}, we get that the left-hand side of~\eqref{eq:goalballs} is contained in the right-hand side of~\eqref{eq:goalballs}. 

To show the other inclusion, notice first that since all the balls are uniformly bounded, by the same reasoning that led to~\eqref{eq:limitpoisson}, 
it suffices to look at a finite number of nodes of the Poisson process and show that a.s.\
\begin{align}\label{eq:goalballs2}
\bigcup_{n_0} \bigcap_{n\geq n_0} \{\forall \ell \in \D_n, \ \exists i\leq k: X_i +\xi_i(\ell) \in B(0,\til{r}_{\ell,n}) \} \subseteq \{ \forall s\leq t,\  \exists i\leq k: X_i+\xi_i(s) \in B_s\}.
\end{align}

In order to establish~\eqref{eq:goalballs2}, 
notice first that the events $\Omega_{n,i}$ are increasing in $n$ and thus, almost surely, there exists $n_0$ large enough so that $\Omega_{n,i}$ holds for all $n\geq n_0$ and all $i=1,\ldots, k$. 
If $\ell$ is such that $\ell \leq s< \ell+t/2^n$,
then there exists $i=1,\ldots,k$ such that $X_i + \xi_i(\ell) \in B(0,\til{r}_{\ell,n})$, and hence using~\eqref{eq:rlnineq} we get 
\[
d(X_i + \xi_i(\ell), B_s) = (\|X_i + \xi_i(\ell)\|_2 - r_s)^+ \leq r_{s,n}\left( 1 + \frac{1}{r_{s,n}^d c(d)n}\right)^{1/d}  - r_s.
\]
Therefore, for all $n\geq n_0$, by the triangle inequality again we have
\begin{align*}
\min_{i=1,\ldots,k} d(X_i + \xi_i(s),B_s)& \leq \min_{i=1,\ldots,k} ( d(\xi_i(s),\xi_i(\ell)) + d(X_i + \xi_i(\ell),B_s) ) \\
&\leq \frac{1}{2}(t/2^n)^{1/3} + r_{s,n}\left( 1 + \frac{1}{r_{s,n}^d c(d)n}\right)^{1/d}  - r_s \to 0
\end{align*}
as $n\to \infty$, since $r_{s,n} \to r_s$ as $n\to \infty$.
Hence, this gives that there exists $i \in \{1,\ldots, k\}$ such that $X_i + \xi_i(s) \in B_s$, since $B_s$ is a closed set and  this finishes the proof of~\eqref{eq:goalballs2} and concludes the proof of the theorem.
\end{proof}


We now explain the measurability issue raised at the beginning of the section. 

\begin{lemma}
Let $(D_s)_s$ be a collection of closed sets in $\R^d$ that are uniformly bounded; i.e., there exists $L_t>0$ such that $\cup_{s\leq t}D_s \subseteq B(0,L_t)$. Then, for all $t\geq 0$, the event 
$\{\tisol^D >t\}$ is measurable. 
\end{lemma}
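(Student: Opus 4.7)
The plan is to show that $A := \{\tisol^D > t\} = \{\forall s \leq t, \ \exists i\colon X_i + \xi_i(s) \in D_s\}$ differs by a null set from the dyadic approximation event
\[
\widetilde A := \bigcup_{n_0}\bigcap_{n\geq n_0}\{\forall \ell \in \D_n, \ \exists i\colon X_i + \xi_i(\ell) \in \til{D}_{\ell,n}\},
\]
which is manifestly measurable since each $\til{D}_{\ell, n}$ is a closed deterministic set and the defining expression involves only countable unions, intersections, and countable existentials over the nodes of $\Pi_0$. Because a set that differs from a measurable set by a null set is measurable in the completion of the underlying probability space, this will give the lemma.

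The forward inclusion $A \subseteq \widetilde A$ is immediate from Lemma~\ref{lem:discreteapprox} applied with $(U_s)_s = (D_s)_s$. For the reverse inclusion $\widetilde A \subseteq A$ (up to a null set), I would mimic the reverse-inclusion step of the proof of Theorem~\ref{thm:beststrategy} (the ball case). The uniform boundedness $\cup_{s \leq t} D_s \subseteq B(0, L_t)$, together with the truncation argument leading to~\eqref{eq:limitpoisson}, allows restricting almost surely to a finite collection of relevant nodes: those of $\Pi_0$ starting in some ball $B(0, R)$ large enough that only these can reach $B(0, L_t)$ by time $t$. On the full-probability event $\bigcap_i \bigcup_n \Omega_{n, i}$ (by L\'evy's modulus of continuity), fix $s \in [0, t]$ and let $\ell_n \in \D_n$ be the largest dyadic rational $\leq s$, so $\ell_n \to s$. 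For all large $n$ there is an index $i_n$ of a relevant node with $X_{i_n} + \xi_{i_n}(\ell_n) \in \til{D}_{\ell_n, n}$, and pigeonhole over the finite relevant set yields an $i^*$ that repeats infinitely often; Brownian continuity then gives $X_{i^*} + \xi_{i^*}(\ell_n) \to X_{i^*} + \xi_{i^*}(s)$.

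The hard part will be to conclude $X_{i^*} + \xi_{i^*}(s) \in D_s$ from this limit. The balls case of Theorem~\ref{thm:beststrategy} uses the radius comparison~\eqref{eq:rlnineq} and the continuity $r_{s, n} \to r_s$, both specific to balls. For general closed $D_s$ the map $s \mapsto D_s$ carries no continuity, and $D_{s(\ell_n)}$ need not approach $D_s$ as $\ell_n \to s$. My preferred workaround is to strengthen the discretization for the purpose of this measurability lemma, replacing $\til{D}_{\ell, n} = D_{s(\ell), n}$ by the slab-wide envelope $\bigcup_{u \in [\ell, \ell + t/2^n]} D_{u, n}$, which contains every $D_s$ in the slab while preserving the closedness needed to define $\widetilde A$; together with the vanishing dilation radius $(t/2^n)^{1/3} \to 0$ and closedness of $D_s$, the intersection over $n$ of the corresponding envelopes at $s$ collapses to $D_s$, forcing the limit point into $D_s$. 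An alternative that bypasses the reverse inclusion entirely is to note that $A^c$ is the projection along the time axis of a Borel-constructible subset of $[0,t]\times\Omega$, hence analytic and therefore universally measurable. Either route places $A$ in the completion of the probability space and finishes the proof.
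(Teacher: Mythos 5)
Your reduction to finitely many relevant nodes and your diagnosis of where the difficulty lies are both correct, but your preferred fix is exactly backwards, and this is a genuine gap. Replacing $\til{D}_{\ell,n}$ by the slab-wide \emph{union} $\bigcup_{u\in[\ell,\ell+t/2^n]}D_{u,n}$ makes the dyadic event too large: the reverse inclusion then fails on a set of positive probability, not just a null set. Concretely, take $D_u=B(0,1)$ for rational $u$ and $D_u=\emptyset$ for irrational $u$. Then $\{\tisol^D>t\}=\emptyset$, but every slab contains a rational time, so every union envelope contains $B(0,1)$, and the envelope event contains the positive-probability event that some node sits in $B(0,1)$ at all dyadic times. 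Your claim that ``the intersection over $n$ of the envelopes at $s$ collapses to $D_s$'' is false because the level-$n$ envelope sees every $D_u$ in the slab, not just $D_s$. The correct discretization --- the one the paper uses --- is the slab-wide \emph{intersection} $Z_\ell=\bigcap_{\ell\le u<\ell+t/2^n}D_{u,n}$ (after first bundling the $k$ surviving nodes into a single $dk$-dimensional Brownian motion with target sets $D_s^{\otimes k}=\{x\in\R^{dk}:\exists i \text{ s.t. } x_i\in D_s\}$, which are closed). With the intersection, both inclusions hold exactly on the full-measure event $\cup_n\Omega_n$: for the forward one, if $\xi(u)\in D_u$ for every $u$ in the slab, then $\xi(\ell)$ is within $\tfrac12(t/2^n)^{1/3}$ of $\xi(u)$ for every such $u$, hence lies in every $D_{u,n}$ and so in $Z_\ell$; for the reverse one, $Z_{\ell_n}\subseteq D_{s,n}$ for the particular $s$ in the slab, so $d(\xi(s),D_s)\le\tfrac32(t/2^n)^{1/3}\to0$ and closedness of $D_s$ concludes. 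This gives an exact identity with a manifestly measurable event, with no continuity of $s\mapsto D_s$ required.

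Your fallback via analytic sets also has a hole. The lemma assumes nothing about joint measurability of $(s,x)\mapsto\ind{x\in D_s}$; if, say, $D_s=\{0\}$ for $s$ in a non-measurable set of times and $D_s=\emptyset$ otherwise, the set $\{(s,\omega):\forall i,\ X_i+\xi_i(s)\notin D_s\}$ is not a Borel (or analytic) subset of $[0,t]\times\Omega$, so the projection theorem does not apply. The slab-intersection construction is precisely what circumvents this: the sets $Z_\ell$ are deterministic closed sets no matter how irregular the family $(D_s)$ is.
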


\begin{proof}[{\bf Proof}]
By the assumption on the sets being uniformly bounded, as in~\eqref{eq:limitpoisson} we can write 
\[
\{\forall s\leq t, \ \exists i: X_i + \xi_i(s) \in D_s\}  = \cup_R \{ \forall s\leq t,\ \exists X_i \in \Pi_0 \cap B(0,R): X_i + \xi_i(s) \in D_s\}.
\]
In order to show the measurability, it suffices to show that, for all $k$, the event
\[
\{ \forall s\leq t, \ \exists i=1,\ldots, k: X_i+\xi_i(s) \in D_s\}
\]
is measurable. But the event above can be alternatively written as
\[
\{ \forall s\leq t, \ (X_1 + \xi_1(s),\ldots, X_k+\xi_k(s)) \in D_s^{\otimes k} \},
\]
where $D_s^{\otimes k} = \{ x=(x_1,\ldots,x_k) \in \R^{dk}: \ \exists i \text{ s.t. } x_i \in D_s\}$ is clearly a closed set. So the initial question of measurability reduces to the question of measurability of the event 
\[
\{\forall s\leq t, \ \xi(s) \in U_s\},
\]
where $\xi$ is a Brownian motion in $dk$ dimensions and $(U_s)$ is a collection of closed sets. 
In order to show this, we use the same notation as in~\eqref{eq:defsets} and  
define $\Omega_n$ as in~\eqref{eq:defomega} but only for one Brownian motion and $Z_{\ell} = \bigcap_{\ell \leq s < \ell + t/2^n} D_{s,n}$,
which is again closed as an intersection of closed sets. Then using similar ideas as in the proof of~\eqref{eq:goalmeasurability} and~\eqref{eq:goalballs2} we get that 
on $\cup_n \Omega_n$ 
\begin{align*}
\{\forall s\leq t, \xi(s) \in D_s\} = \bigcup_{n_0}\bigcap_{n\geq n_0}\{\forall \ell \in \D_n,  \xi(\ell) \in Z_\ell \}.
\end{align*}
Hence  the measurability follows, since by L\'evy's modulus of continuity theorem (see for instance \cite[Theorem~1.14]{BM})  we have that $\pr{\cup_m \Omega_{m}} =1$.
\end{proof}

\section{Concluding remarks and questions}\label{sec:questions}

A related quantity that has been studied for Poisson Brownian motions is the \emph{detection time}, $\tdet$. Consider a
target particle $u$ and define $\tdet$ as the first time at which a node of the Poisson point process is within distance $r$ of $u$.
Kesidis, Konstantopoulos and Phoha~\cite{Kesidis,Konst} used a result from stochastic geometry~\cite{SKM95} to show that,
when $u$ stays fixed at the origin,
\begin{equation}
   \pr{\tdet > t} = \exp\left(-\lambda\E{\vol{W_\origin(t)}}\right)
   = \exp\left(-c_d\frac{t}{\Psi_d(t)}(1+o(1))\right),
   \label{eq:wiener}
\end{equation}
where $W_\origin(t)$ is the Wiener sausage as defined in~\eqref{eq:saucisse} and $c_d$ is an explicit constant.

Even though the isolation time seems to be similar to the detection time, we are not aware of any reduction that allows us to use ideas from
stochastic geometry to characterize $\tisol$.

{\bf{Question.}} Does the tail of $\tisol$ behave similarly to the tail of $\tdet$? Namely, is it true that for all dimensions $d\geq 1$, there exists a constant $\tilde c_d$ such that
   \begin{equation}
      \pr{\tisol > t} = \exp\left(-\tilde c_d\frac{t}{\Psi_d(t)}(1+o(1))\right)?
      \label{eq:q1}
   \end{equation}

Peres et al.~\cite{PSSS11} and Peres and Sousi~\cite{PS11} studied the detection time for the case when $u$ also moves. Among other things, they established that, when the target is allowed to move 
independently of the nodes of $\Pi_0$, then the best strategy for $u$ to avoid detection
is to stay fixed and not to move.
Similar results were obtained for random walks in the lattice $\zset^d$ by Moreau et al.~\cite{MOBC} and Drewitz et al.~\cite{DGRS}.
It is interesting that staying fixed is also the best strategy to avoid isolation, cf. Corollary~\ref{cor:strategy}.


We now discuss some additional motivation and conclude with another open problem. 
For each $s\geq 0$, let $G_s$ denote the graph with vertex set $\Pi_s$ and an edge between any two nodes of $\Pi_s$ that are within distance $r$ of each other.
As in~\cite{PSSS11}, we call this stationary sequence of graphs the \emph{mobile geometric graph}.
This and other variants have been considered as models for mobile wireless networks,
which motivated the study of some properties of this types of graphs, 
such as broadcast~\cite{clementi,PPPU10,PSSS11,LLMSW12}, spread of infection~\cite{KestenSidoravicius,KS04}, 
detection of targets~\cite{Kesidis,Konst,PSSS11,PS11,S11} and percolation~\cite{SS10,PSSS11}. We refer the reader to the discussion in~\cite{SS10} for additional motivation and related work in 
the engineering literature.

Regarding percolation properties of $G_s$, 
it is known~\cite{vandenberg} that, for $d\geq 2$, there exists a constant $\lambda_\critical=\lambda_\critical(d)$ such that,
if $\lambda>\lambda_\critical$, then $G_s$ contains an infinite connected component at all times.
Peres et al.~\cite{PSSS11} considered the regime $\lambda>\lambda_\critical$ and derived lower and upper bounds for the so-called
\emph{percolation time}, which is the first time $\tperc$ at which a non-mobile target $u$ belongs to the infinite connected component.
A quantity related to the isolation time is the \emph{non-percolation} time $\tnonperc$, which is the first time at which $u$ does \emph{not} belong
to the infinite connected component. Clearly $\tnonperc \leq \tisol$. We conclude with the question below.

{\bf{Question.}} Do the 
tail probabilities of $\tperc$ and $\tnonperc$ satisfy~\eqref{eq:q1}?

\section*{Acknowledgments}
We are grateful to Almut Burchard for useful discussions on rearrangement inequalities. 
We thank Philippe Charmoy and the referee for helpful comments. Part of this work was done while the second author was visiting Microsoft Research.

\bibliographystyle{plain}
\bibliography{isol}

\begin{thebibliography}{10}

\bibitem{BMS}
A.M. Berezhovskii, Yu.A. Makhovskii, and R.A. Suris.
\newblock {Wiener} sausage volume moments.
\newblock {\em Journal of Statistical Physics}, 57:333--346, 1989.

\bibitem{vandenberg}
{J.~van den} Berg, R.~Meester, and D.G. White.
\newblock Dynamic boolean model.
\newblock {\em Stochastic Processes and their Applications}, 69:247--257, 1997.

\bibitem{BrascLiebLutt}
H.~J. Brascamp, Elliott~H. Lieb, and J.~M. Luttinger.
\newblock A general rearrangement inequality for multiple integrals.
\newblock {\em J. Functional Analysis}, 17:227--237, 1974.

\bibitem{CTaylor}
Z.~Ciesielski and S.J. Taylor.
\newblock First passage times and sojourn times for {B}rownian motion in space
  and the exact {H}ausdorff measure of the sample path.
\newblock {\em Trans. Amer. Math. Soc.}, 103:434--450, 1962.

\bibitem{clementi}
A.~Clementi, F.~Pasquale, and R.~Silvestri.
\newblock {MANETS}: high mobility can make up for low transmission power.
\newblock In {\em Proceedings of the 36th International Colloquium on Automata,
  Languages and Programming ({ICALP})}, 2009.

\bibitem{DGRS}
A.~Drewitz, J.~G\"artner, A.F. Ram\'irez, and R.~Sun.
\newblock Survival probability of a random walk among a poisson system of
  moving traps, 2010.
\newblock {arXiv:1010.3958v1}.

\bibitem{Kesidis}
G.~Kesidis, T.~Konstantopoulos, and S.~Phoha.
\newblock Surveillance coverage of sensor networks under a random mobility
  strategy.
\newblock In {\em Proceedings of the 2nd {IEEE} International Conference on
  Sensors}, 2003.

\bibitem{KestenSidoravicius}
H.~Kesten and V.~Sidoravicius.
\newblock The spread of a rumor or infection in a moving population.
\newblock {\em The annals of probability}, 33:2402--2462, 2005.

\bibitem{KS04}
H.~Kesten and V.~Sidoravicius.
\newblock A phase transition in a model for the spread of an infection.
\newblock {\em Illinois Journal of Mathematics}, 50:547--634, 2006.

\bibitem{Konst}
T.~Konstantopoulos.
\newblock Response to {Prof.}\ {Baccelli's} lecture on modelling of wireless
  communication networks by stochastic geometry.
\newblock {\em Computer Journal Advance Access}, 2009.

\bibitem{LLMSW12}
H.~Lam, Z.~Liu, M.~Mitzenmacher, X.~Sun, and Y.~Wang.
\newblock Information dissemination via random walks in $d$-dimensional space.
\newblock In {\em Proceedings of the 23th {ACM-SIAM} Symposium on Discrete
  Algorithms ({SODA})}, 2012.

\bibitem{MOBC}
M.~Moreau, G.~Oshanin, O.~B\'enichou, and M.~Coppey.
\newblock Lattice theory of trapping reactions with mobile species.
\newblock {\em Phys. Rev.}, E 69, 2004.

\bibitem{BM}
P.~{M\"orters} and Y.~Peres.
\newblock {\em Brownian Motion}.
\newblock Cambridge University Press, 2010.

\bibitem{PSSS11}
Y.~Peres, A.~Sinclair, P.~Sousi, and A.~Stauffer.
\newblock Mobile geometric graphs: detection, coverage and percolation.
\newblock In {\em Proceedings of the 22st {ACM-SIAM} Symposium on Discrete
  Algorithms ({SODA})}, pages 412--428, 2011.

\bibitem{PS11}
Y.~Peres and P.~Sousi.
\newblock An isoperimetric inequality for the {W}iener sausage, 2011.
\newblock {arXiv:1103.6059}.

\bibitem{PPPU10}
A.~Pettarin, A.~Pietracaprina, G.~Pucci, and E.~Upfal.
\newblock Infectious random walks, 2010.
\newblock {arXiv:1007.1604}.

\bibitem{SS10}
A.~Sinclair and A.~Stauffer.
\newblock Mobile geometric graphs, and detection and communication problems in
  mobile wireless networks, 2010.
\newblock {arXiv:1005.1117v1}.

\bibitem{Spitzer}
F.~Spitzer.
\newblock Electrostatic capacity, heat flow, and {Brownian} motion.
\newblock {\em Z.~Wahrscheinlichkeitstheorie verw.\ Geb.}, 3:110--121, 1964.

\bibitem{S11}
A.~Stauffer.
\newblock Space-time percolation and detection by mobile nodes, 2011.
\newblock {arXiv:1008.6322v2}.

\bibitem{SKM95}
D.~Stoyan, W.S. Kendall, and J.~Mecke.
\newblock {\em Stochastic Geometry and its Applications}.
\newblock John Wiley \& Sons, 2nd edition, 1995.

\end{thebibliography}

\end{document}